\documentclass[a4paper,10pt]{article}

%%%%%%%%%%%%%%%%%%%%%%%%%%%%%%%%%%%%%%%%%%%%%%%%%%%%%%%%%%%%%%%%%%%%%%%%%%%%%%%%%%%%%%%%%%%%%%%%%%%
%                                       	Packages	                                          %
%%%%%%%%%%%%%%%%%%%%%%%%%%%%%%%%%%%%%%%%%%%%%%%%%%%%%%%%%%%%%%%%%%%%%%%%%%%%%%%%%%%%%%%%%%%%%%%%%%%

\usepackage{amsmath}
\usepackage{mathtools}
\usepackage{amssymb}
\usepackage{bbm}
\usepackage{enumerate}
\usepackage{algorithm}
\usepackage{algorithmic}
\usepackage{amsfonts}
\usepackage{amsthm}
\usepackage{bbm}
\usepackage{bm}
\usepackage[mathscr]{eucal}
\usepackage{a4wide}
\usepackage{authblk}
\usepackage{tikz}
\usetikzlibrary{decorations.pathreplacing}
\usepackage{hyperref}
%\usepackage[notref,notcite,color]{showkeys}

%%%%%%%%%%%%%%%%%%%%%%%%%%%%%%%%%%%%%%%%%%%%%%%%%%%%%%%%%%%%%%%%%%%%%%%%%%%%%%%%%%%%%%%%%%%%%%%%%%%
%											Macros												  %
%%%%%%%%%%%%%%%%%%%%%%%%%%%%%%%%%%%%%%%%%%%%%%%%%%%%%%%%%%%%%%%%%%%%%%%%%%%%%%%%%%%%%%%%%%%%%%%%%%%

%%%%%%%%%%%%%%%%%%%%%%%%%%%%%%%%%%%%%%%%%%%%%%%%%%%%%%%%%%%%%%%%%%%%%%%%%%%%%%%%%%%%%%%%%%%%%%%%%%%
%                                        Probability                                              %
%%%%%%%%%%%%%%%%%%%%%%%%%%%%%%%%%%%%%%%%%%%%%%%%%%%%%%%%%%%%%%%%%%%%%%%%%%%%%%%%%%%%%%%%%%%%%%%%%%%

\newcommand{\pr}{\mathbb{P}}								%	short hand for blackboard P

%% Probability commands %%

\newcommand{\Prob}[1]{\pr\left(#1\right)}					%	Standard probability command, 
															%	argument is event
																
				%	Probability command with subscript n

				%	Probability command conditioned on 
															%	the points of the graph specified in
															%	the second argument

	%	Conditional probability command, 
															%	second argument is conditioned 
															%	event/random variable

%%%%%%%%%%%%%%%%%%%%%%%%%%%%%%%%%%%%%%%%%%%%%%%%%%%%%%%%%%%%%%%%%%%%%%%%%%%%%%%%%%%%%%%%%%%%%%%%%%%
%                                         Expectation                                             %
%%%%%%%%%%%%%%%%%%%%%%%%%%%%%%%%%%%%%%%%%%%%%%%%%%%%%%%%%%%%%%%%%%%%%%%%%%%%%%%%%%%%%%%%%%%%%%%%%%%

\newcommand{\e}{\mathbb{E}}								%	short hand for blackboard E

%% Expectation commands %%

\newcommand{\Exp}[1]{\e\left[#1\right]}					%	Standard expectation command, argument
														%	is random variable
															
				%	Expectation with subscript n

			%	Expectation command conditioned on 
														%	the points of the graph specified in
														%	the second argument

	%	Conditional expectation, second 
														%	argument is conditioned event/random 
														%	variable.

%%%%%%%%%%%%%%%%%%%%%%%%%%%%%%%%%%%%%%%%%%%%%%%%%%%%%%%%%%%%%%%%%%%%%%%%%%%%%%%%%%%%%%%%%%%%%%%%%%%
%                                     Probabilistic limits                                        %
%%%%%%%%%%%%%%%%%%%%%%%%%%%%%%%%%%%%%%%%%%%%%%%%%%%%%%%%%%%%%%%%%%%%%%%%%%%%%%%%%%%%%%%%%%%%%%%%%%%

	%	Convergence in probability
		%	Convergence in distribution

%%%%%%%%%%%%%%%%%%%%%%%%%%%%%%%%%%%%%%%%%%%%%%%%%%%%%%%%%%%%%%%%%%%%%%%%%%%%%%%%%%%%%%%%%%%%%%%%%%%
%                                        Scaling notations                                        %
%%%%%%%%%%%%%%%%%%%%%%%%%%%%%%%%%%%%%%%%%%%%%%%%%%%%%%%%%%%%%%%%%%%%%%%%%%%%%%%%%%%%%%%%%%%%%%%%%%%

\newcommand{\bigO}[1]{O\left(#1\right)}				%	Big O notation
			%	Small o notation
		%	Theta notation
		%	Probabilistic big O notation
		%	Probabilistic little o notation

%%%%%%%%%%%%%%%%%%%%%%%%%%%%%%%%%%%%%%%%%%%%%%%%%%%%%%%%%%%%%%%%%%%%%%%%%%%%%%%%%%%%%%%%%%%%%%%%%%%
%                                        Indicator commands                                       $
%%%%%%%%%%%%%%%%%%%%%%%%%%%%%%%%%%%%%%%%%%%%%%%%%%%%%%%%%%%%%%%%%%%%%%%%%%%%%%%%%%%%%%%%%%%%%%%%%%%

	%	Command for indicator where argument is a condition
					%	Command for indicator where the argument is an event 
								%	Indication shorthand command

%%%%%%%%%%%%%%%%%%%%%%%%%%%%%%%%%%%%%%%%%%%%%%%%%%%%%%%%%%%%%%%%%%%%%%%%%%%%%%%%%%%%%%%%%%%%%%%%%%%
%                                     Delimiters (using mathtools)                                     %
%%%%%%%%%%%%%%%%%%%%%%%%%%%%%%%%%%%%%%%%%%%%%%%%%%%%%%%%%%%%%%%%%%%%%%%%%%%%%%%%%%%%%%%%%%%%%%%%%%%

\DeclarePairedDelimiter{\abs}{\lvert}{\rvert}
\DeclarePairedDelimiter{\abss}{\|}{\|}

%%%%%%%%%%%%%%%%%%%%%%%%%%%%%%%%%%%%%%%%%%%%%%%%%%%%%%%%%%%%%%%%%%%%%%%%%%%%%%%%%%%%%%%%%%%%%%%%%%%
%                                     Paper specific commands                                     %
%%%%%%%%%%%%%%%%%%%%%%%%%%%%%%%%%%%%%%%%%%%%%%%%%%%%%%%%%%%%%%%%%%%%%%%%%%%%%%%%%%%%%%%%%%%%%%%%%%%

\newcommand{\tm}{T_{\max}}

\newcommand{\ep}{\varepsilon}

\newcommand{\Ecal}{\ensuremath{\mathcal{E}}}			%	Entropy letter
	%	Entropy function

\newcommand{\Gbb}{\ensuremath{\mathbb{G}}}				%	Random graphs
\newcommand{\Gcal}{\ensuremath{\mathcal{G}}}			%	Set of all graphs
\newcommand{\Gcals}{\Gcal^{*}}
\newcommand{\Gs}{G^{*}}
	%Probability distribution on d-regular graphs

\definecolor{edgered}{RGB}{255,0,0}
\definecolor{edgeblue}{RGB}{20,40,255}
\definecolor{edgegreen}{RGB}{40,190,70}

\allowdisplaybreaks

%%%%%%%%%%%%%%%%%%%%%%%%%%%%%%%%%%%%%%%%%%%%%%%%%%%%%%%%%%%%%%%%%%%%%%%%%%%%%%%%%%%%%%%%%%%%%%%%%%%
%                                Setup of the theorem environment                                 %
%%%%%%%%%%%%%%%%%%%%%%%%%%%%%%%%%%%%%%%%%%%%%%%%%%%%%%%%%%%%%%%%%%%%%%%%%%%%%%%%%%%%%%%%%%%%%%%%%%%

\newtheorem{theorem}{Theorem}[section]
\newtheorem{lemma}[theorem]{Lemma}

\newtheorem{remark}[theorem]{Remark}

\theoremstyle{definition}
\newtheorem{definition}[theorem]{Definition}
\newtheorem{claim}[theorem]{Claim}

%%%%%%%%%%%%%%%%%%%%%%%%%%%%%%%%%%%%%%%%%%%%%%%%%%%%%%%%%%%%%%%%%%%%%%%%%%%%%%%%%%%%%%%%%%%%%%%%%%%
%                                				Title			                                  %
%%%%%%%%%%%%%%%%%%%%%%%%%%%%%%%%%%%%%%%%%%%%%%%%%%%%%%%%%%%%%%%%%%%%%%%%%%%%%%%%%%%%%%%%%%%%%%%%%%%

\title{Regular graphs with many triangles are structured}
\author[1]{Pim van der Hoorn}
\author[2]{Gabor Lippner}
\author[3,4]{Elchanan Mossel}

\affil[1]{TU Eindhoven, Department of Mathematic and Computer Science}
\affil[2]{Northeastern University, Department of Mathematics}
\affil[3]{MIT, Department of Mathematics}
\affil[4]{MIT Institute for Data Systems and Society}

%%%%%%%%%%%%%%%%%%%%%%%%%%%%%%%%%%%%%%%%%%%%%%%%%%%%%%%%%%%%%%%%%%%%%%%%%%%%%%%%%%%%%%%%%%%%%%%%%%%
%                                			Document			                                  %
%%%%%%%%%%%%%%%%%%%%%%%%%%%%%%%%%%%%%%%%%%%%%%%%%%%%%%%%%%%%%%%%%%%%%%%%%%%%%%%%%%%%%%%%%%%%%%%%%%%
\sloppy

\begin{document}

\maketitle

\abstract{We compute the leading asymptotics of the logarithm of the number of $d$-regular graph having at least a fixed positive fraction $c$ of the maximum possible number of triangles, and provide a strong structural description of almost all such graphs.

When $d$ is constant, we show that such graphs typically consist of many disjoint $d+1$-cliques and an almost triangle-free part. When $d$ is allowed to grow with $n$, we show that such graphs typically consist of $d+o(d)$ sized almost cliques together with an almost triangle-free part.
}

This confirms a conjecture of Collet and Eckmann from 2002 and considerably strengthens their observation that the triangles cannot be totally scattered in typical instances of regular graphs with many triangles.

%%%%%%%%%
\section{Introduction}
%%%%%%%%%

It is easy to see that a $d$-regular graph on $n$ nodes cannot have more than $\tm = \tm(n,d) = \frac{n}{3} \tbinom{d}{2}$ triangles, and that this value is achieved exactly if the graph is a disjoint union of $d+1$-cliques. In this paper we prove a \emph{strong, almost sure structural stability result} for this extremal problem: Let $0 < c \leq 1$ and let $\Gcal_{d,c}(n)$ denote the set of $d$-regular graphs on $n$ labeled nodes that contain at least $c \cdot \tm$ triangles. We show that, for constant $d$ and large $n$, almost all graphs in $\Gcal_{d,c}(n)$ consist of a disjoint union of $d+1$ cliques and an almost triangle-free part. 

This result may not seem surprising at first, and especially for $c$ close to 1 it seems quite natural to expect. However we would like to emphasize that it also holds for very small positive $c$. In that regime the required number of triangles could easily be arranged in such a way that no vertex has more than 1 triangle in its neighborhood. However, as it turns out, such graphs only constitute a vanishing fraction of elements of $\Gcal_{d,c}(n)$.

The study of $\Gcal_{d,c}(n)$ has been initiated by Collet and Eckmann~\cite{eckmann2002}, who proved
\begin{equation}\label{eq:eckmann}
1-c \frac{2 d}{d+1} \leq \liminf_{n\to \infty} \frac{\log \Gcal_{d,c}(n)}{\log \Gcal_d(n)} \leq \limsup_{n\to \infty} \frac{\log \Gcal_{d,c}(n)}{\log \Gcal_d(n)} \leq 1- \frac{c}{3},
\end{equation}
where $\Gcal_d(n)$ is the set of all $d$-regular graphs on $n$ labeled nodes. They conjectured that this limit exists. Using a heuristic counting argument they concluded that the triangles should be clustered in a ``typical'' element of $\Gcal_{d,c}(n)$. 

In this paper we prove their conjecture and find the correct limiting value
\begin{equation}
\lim_{n\to \infty} \frac{\log \Gcal_{d,c}(n)}{\log \Gcal_d(n)} = 1 - c \cdot  \frac{d-1}{d+1}.
\end{equation}
Furthermore we prove that the observation that triangles are clustered in typical elements of $\Gcal_{d,c}(n)$ hold in a very strong sense: for almost all graphs in the class, almost all triangles are contained in $d+1$-cliques. 

\subsection{Probabilistic context}

While our methods are purely combinatorial, the results are more conveniently stated in the language of random graphs. What is the probability that a random graph has a lot more triangles than expected? This is a typical question in the field of \emph{large deviations}, the theory that studies the tail behavior of random variables or, stated differently, the behavior of random objects conditioned on a parameter being far from its expectation. For example, one of the earliest results of this flavor, Cram\'er's Theorem states that for i.i.d. variables $X \sim X_1,X_2,\dots$ there exists a ``rate function'' $I(x)$ depending on the distribution of $X$ such that 
\[ \Prob{ \sum_1^N X_i \geq Nx} \approx e^{-N \cdot I(x)}.\] 

\subsubsection{Upper tail interpretation}

In random graphs, the question about the upper tail for triangles in $\Gbb(n,p)$ has been long studied for a constant factor of deviation from the mean \cite{janson2002infamous}.  More precisely, let $ t(\Gbb(n,p))$ denote the triangle density in the Erd\H{o}s-R\'{e}nyi random graph, normalized so that $\Exp{t(\Gbb(n,p))} = p^3$. One would like to understand the asymptotic behavior of 
\[ 
	r(n,p,\delta) = -\log \Prob{t(\Gbb(n,p) > (1+\delta)p^3}
\] 

The dense case ($p$ a constant) has been reduced to an analytic variational problem by Chatterjee and Varadhan~\cite{chatterjee2011large} using methods from graph limits. However, the solution of this variational problem is only known in certain parameter ranges (see~\cite{lubetzky2015replica} for details).  In the sparse ($p = o(1)$) regime the asymptotics $r(n,p,\delta) \approx n^2 p^2 \log(1/p)$ have been determined in a long series of papers by many authors~\cite{vu2001large,kim2004divide,janson2004deletion,janson2004upper,chatterjee2012missing,demarco2011upper}. The variational methods were extended to (part of) the sparse regime in~\cite{chatterjee2016nonlinear} and using this, Lubetzky and Zhao~\cite{lubetzky2017variational} found the exact asymptotics of $r(n,p,\delta)$ in the $n^{-1/42} \log n \leq p \ll 1$ range. Recently, Cook and Dembo~\cite{cook_dembo_2018} and Augeri~\cite{augeri_2018} extended it to the range $n^{-1/2} \ll p \ll 1$, and Harel, Mousset and Samotij~\cite{harel_2019} to all $n^{-1} \log n \ll p \ll 1$.

In the case of random regular graphs $\Gbb_d(n)$ have been studied much less. Kim, Sudakov, and Vu~\cite{kim2007small} obtained that the distribution of small subgraphs of $\Gbb_d(n)$ is asymptotically Poisson in the sparse case, implying an asymptotic formula for the tail probability $\Prob{T(\Gbb_d(n)) > (1+\delta)\Exp{T(\Gbb_d(n))}}$, where $T(G)$ denotes the number of triangles in the graph $G$. The expected number of triangles in $\Gbb_d(n)$ is finite. This means that the ``standard'' regime for large deviations - exceeding the expected value by a constant factor - in this case is not interesting.

\subsubsection{Maximum entropy random graphs with triangles}

In this note we are interested in the more extreme tail probability $\Prob{T(\Gbb_d(n)) > c \tbinom{d}{2}n/3 }$. The reason for analyzing this tail probability originated from a related problem of finding random graph models that maximize entropy under specific constraints. 

Let $\pr_{n}$ be some probability distribution on the set $\Gcal(n)$ of graphs on $n$ labeled nodes. Then the entropy of $\pr_{n}$ is defined as
\begin{equation}\label{eq:def_entropy_P}
	\Ecal[\pr_{n}] = \sum_{G \in \Gcal(n)} - \pr_{n}(G)\log\left(\pr_{n}(G_n)\right).
\end{equation}
Now let $\Gcal^\ast(n)$ denote the set of graphs on $n$ labeled nodes with some additional properties, for instance specified edge or triangle densities. Then, in order to study the structure of "typical" graphs with these constraints, one wants to find the uniform distribution on $\Gcal^\ast(n)$. This corresponds to finding the distribution $\pr_{n}^\ast$ that maximizes the entropy $\Ecal[\pr_{n}]$ subject to the constraint that $\pr_{n}^\ast = 0$ outside $\Gcal^\ast(n)$. 

%It follows that the entropy of this solution is given by
%\[
%	\Ecal[\pr_n^\ast] = \sum_{G \in \Gcal(n)} -\frac{1}{|\Gcal^\ast(n)|} \log |\Gcal^\ast(n)|
%	= -\log |\Gcal^\ast(n)|.
%\]
%Hence, understanding the logarithmic size of the set of constrained graphs is an important first step to understand the maximum entropy solution.

It turns out that in many cases, computing the rate function also comes down to solving an optimization problem involving entropy. For example, Chatterjee and Dembo~\cite{chatterjee2016nonlinear} showed that, up to lower order terms, the rate function corresponding to the large deviation result for subgraph counting can be expressed as the solution to a specific entropy related optimization problem. For large deviations of triangles, let $\mathscr{G}_n$ denote the set of undirected graphs on $n$ nodes with edges weights $g_{ij} \in [0,1]$, then the rate function is obtained, up to lower order terms, as
\[
	r(n,p,\delta) = \inf\left\{I_p(G) \, : \, G \in \mathscr{G}_n, \, t(G) > (1+\delta)p^3\right\}.
\]
where $t(G) = n^{-3} \sum_{1 \le i,j,k \le n} g_{ij} g_{jk} g_{ki}$ and $I_p(G)$ is the so-called relative entropy of the weighted graph $G$
\[
	I_p(G) = \sum_{1 \le i < j \le n} g_{ij}\log \frac{g_{ij}}{p} + (1-g_{ij})\log \frac{1 - g_{ij}}{1-p}.
\]

In the case of dense graphs, such optimizations problems can be used to establish structural results for constrained random graphs. In the case of edge and triangle densities, a collection of research by Kenyon, Radin and co-authors~\cite{radin2013phase,radin2014asymptotics,kenyon2016bipodal,kenyon2017phases} showed that the limits of dense maximal entropy random graphs with given edge and triangle densities have a bipodal structure, at least in a narrow range just above the average. This means that the graph is split into two components with specific inter- and intra-component connection probabilities.

Recently, some techniques have been extended to solve the problem of finding maximum entropy sparse graphs with a given power-law degree distribution~\cite{hoorn2017sparse}. However, the degree distribution is a relatively global characteristic and hence is not expected to influence graph structures that much. A natural extension of this problem is therefore to include a constraint related to triangles, try to find the corresponding maximum entropy solutions and see what this tells us about the structure of such graphs. A key motivation for this kind of question is the work by Krioukov~\cite{krioukov2016clustering}, which hinted to the fact that triangle constraints might enforce the resulting maximum entropy solution to have some geometric component.

\subsection{Results}

Motivated by the question ``can local constraints induce global (geometric) behavior?'', we study the random $d$-regular graph $\Gbb_d(n)$ conditioned on having at least a positive fraction of the maximum possible number of triangles. (For $d$ fixed this just means linearly many triangles, in $n$.) 
%pim-pass1:As explained in 
With respect to the previous section, our setting is related to the entropy maximization problem with local and global constraints, i.e. where each node must have degree exactly $d$ and must be incident to at least $t$ triangles on average.

Let $\tm = \tm(n,d) = \tbinom{d}{2}n/3$ be the maximum number of triangles an $n$ vertex $d$-regular graph can have. Let $\Gcal_{d,c}(n)$ denote the set of $d$-regular graphs on $n$ labeled nodes that contain at least $c \cdot \tm$ triangles. We compute the leading asymptotics of $|\Gcal_{d,c}(n)|$ for fixed $c$, as $n \to \infty$, where $d$ is either a constant or can grow with $n$ as long as $\log d = o(\log n)$. We provide a structural description of a ``typical'' element of $\Gcal_{d,c}(n)$. We then extend these results to case of $k$-cliques in $d$-regular graphs.

\subsubsection{Number of $d$-regular graph with many triangles}

The dependence of $d$ on $n$ will be suppressed from the notation. We always assume $d = o(n)$. We will emphasize when constant $d$ is assumed.

\begin{theorem}\label{thm:counting} For a fixed $0 < c < 1$ we have 
\[ 
 - O\left(\frac{1}{\log \frac{n}{d}}\right) \leq \frac{\log \abs{\Gcal_{d,c}(n)}}{\frac{dn}{2} \log \frac{n}{d+1}} - \left(1 - c\cdot \frac{d-1}{d+1}\right) \leq  c\frac{\log d}{\log \frac{n}{d+1}} +  O\left(\frac{1}{\log \frac{n}{d}}\right)\] 
\end{theorem}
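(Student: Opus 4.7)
The plan is to prove matching lower and upper bounds on $\log|\Gcal_{d,c}(n)|$, each pinning it at $\frac{dn}{2}\log\frac{n}{d+1}\bigl(1 - c\cdot\frac{d-1}{d+1}\bigr)$ up to the corrections built into the displayed inequality.

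\textbf{Lower bound via explicit construction.} Set $m = \lfloor cn/(d+1)\rfloor$ and build graphs consisting of $m$ vertex-disjoint $(d+1)$-cliques on $s = m(d+1) \approx cn$ vertices together with an arbitrary $d$-regular graph on the remaining $n-s$ vertices. A $(d+1)$-clique saturates the degree of each of its vertices, so no edges cross the clique boundary, and the clique part alone contributes $m\binom{d+1}{3} \geq c\tm(1 - o(1))$ triangles irrespective of the structure outside. The number of such graphs is
$$\binom{n}{s}\cdot\frac{s!}{((d+1)!)^m\,m!}\cdot|\Gcal_d(n-s)|,$$
and Stirling together with the standard configuration-model estimate $\log|\Gcal_d(N)| = \frac{Nd}{2}\log(N/d)(1+o(1))$ yields a clique-partition contribution of $\sim\frac{cnd}{d+1}\log\frac{n}{d+1}$ and a residual contribution of $\sim\frac{(1-c)nd}{2}\log\frac{n}{d+1}$. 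Their sum rearranges to $\frac{nd}{2}\log\frac{n}{d+1}\bigl(1 - c\cdot\frac{d-1}{d+1}\bigr)$, matching the target.

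\textbf{Upper bound by induction on $n$ with a clique-cover decomposition.} Given $G \in \Gcal_{d,c}(n)$, extract a maximal vertex-disjoint family $\mathcal{K}(G)$ of $(d+1)$-cliques, of size $m$. Since each such clique saturates its vertices' degrees, it is isolated from the rest, and $G$ splits into $\mathcal{K}(G)$ (using $s = (d+1)m$ vertices) and a $K_{d+1}$-free $d$-regular residual $G'$ on $n-s$ vertices. The identity $T(G) = m\binom{d+1}{3} + T(G')$ forces $G' \in \Gcal_{d,c'}(n-s)$ with $c' = \max(0,(cn-s)/(n-s))$. Partitioning $\Gcal_{d,c}(n)$ by $s$ and applying the induction to $|\Gcal_{d,c'}(n-s)|$ for $s \geq 1$:
$$|\Gcal_{d,c}(n)| \leq \bigl|\Gcal_{d,c}^{\text{no }K_{d+1}}(n)\bigr| + \sum_{s\geq 1}\binom{n}{s}\frac{s!}{((d+1)!)^{s/(d+1)}(s/(d+1))!}|\Gcal_{d,c'}(n-s)|.$$
Substituting the inductive bound (using $c'(n-s) = \max(0,cn-s)$) and combining with the multinomial coefficient, the dependence on $s$ cancels to leading order, yielding
$$\frac{nd}{2}\log\frac{n}{d+1}\left(1 - c\cdot\frac{d-1}{d+1}\right) + \frac{s}{d+1}\log\frac{n}{s} + o\!\left(\frac{nd}{2}\log\frac{n}{d+1}\right)$$
uniformly in $s$. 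The spurious $\frac{s}{d+1}\log\frac{n}{s}$ is $O(n/d)$ and thus absorbed; summation over $s$ adds at most $\log n$; and the $c\log d/\log(n/(d+1))$ slack on the upper side of the theorem captures Stirling corrections from $\log(d!)$-terms that become relevant as $d$ grows.

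\textbf{Main obstacle.} The trickiest piece is the $s=0$ summand $|\Gcal_{d,c}^{\text{no }K_{d+1}}(n)|$, which the induction cannot reduce. The natural route is an extremal-type bound showing that a $d$-regular graph with no $(d+1)$-clique has strictly fewer than $c\tm$ triangles once $c$ exceeds an explicit threshold $\alpha_d$, making this term vacuous there; for $c \leq \alpha_d$ an independent bound on the $K_{d+1}$-free count is required, perhaps via iterated extraction with smaller near-cliques or a direct entropy argument exploiting the absence of locally complete neighborhoods. A secondary subtlety is propagating the $(1 + o(1))$ factor as a multiplicative correction through the induction so that it does not accumulate unfavorably across the $O(n)$ summands, which forces the hypothesis to be phrased as a ratio bound rather than an additive one.
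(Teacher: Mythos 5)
Your lower bound is essentially the paper's own argument (disjoint $(d+1)$-cliques covering $\approx cn$ vertices plus an arbitrary $d$-regular graph on the rest, counted via Stirling and the known asymptotics for $|\Gcal_d(m)|$), and it is correct.

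The upper bound, however, has a genuine and fatal gap, which you partly acknowledge yourself. Because a $(d+1)$-clique in a $d$-regular graph saturates all of its vertices' degrees, every $(d+1)$-clique is a connected component; your induction therefore does nothing but peel off these components and reduce the entire problem to bounding $\bigl|\Gcal_{d,c'}^{\text{no }K_{d+1}}(n-s)\bigr|$ --- the count of $K_{d+1}$-free $d$-regular graphs with at least $c'\binom{d}{2}(n-s)/3$ triangles. This is not a residual ``$s=0$ corner case'': it is the whole difficulty, and your proposed escape route does not exist. There is no threshold $\alpha_d$ below $\frac{d-2}{d-1}$ above which $K_{d+1}$-free $d$-regular graphs must have fewer than $c\tm$ triangles: taking disjoint copies of $K_{d+2}$ minus a perfect matching (each copy is $d$-regular with $\frac{(d+2)d(d-2)}{6}$ triangles and no $(d+1)$-clique) covers all of $[n]$ and realizes $\frac{d-2}{d-1}\cdot\tm$ triangles with zero cliques; mixing such components with a random $d$-regular part produces, for every $c<\frac{d-2}{d-1}$, exponentially rich clique-free families whose logarithmic size is close to the target. (The paper makes exactly this observation at the start of its Section on growing $d$, which is why its structural theorem for $1\ll d\ll n$ is phrased in terms of pseudo-cliques rather than cliques.) So for essentially all relevant $c$ your induction terminates in a term you have no tool to bound, and the ``direct entropy argument exploiting the absence of locally complete neighborhoods'' that you defer to is precisely the missing proof.

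The paper's upper bound avoids any clique decomposition. It orders the edges of a labelled (configuration-model) graph $G^*$ by a configuration ordering, records in a vector $\phi(G^*)\in\{0,1\}^{dn/2}$ which edges close a triangle when revealed, and shows (i) a graph whose reveal profile has $\abss{x}$ ones can be reconstructed in at most $(dn)^{dn/2}(d^2/n)^{\abss{x}}$ ways, and (ii) for any $G^*$ with at least $c\tm$ triangles, a uniformly random relabelling of the vertices yields, with probability at least $2/(dn)$, a profile with $\abss{x}\geq c\frac{dn}{2}\frac{d-1}{d+1}-1$; this last step uses $\Prob{X_e=1}=t_e/(t_e+2)\geq t_e/(d+1)$ for each edge and Markov's inequality. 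Combining the two gives the stated upper bound uniformly in $d$, with no case analysis on cliques at all. If you want to salvage your approach, you would need an independent argument for the clique-free count, and the paper's reveal-profile method is, in effect, exactly that argument applied to all graphs at once.
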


The part $ \frac{dn}{2} \log \frac{n}{d+1}$ is related to $\log \abs{\Gcal_{d}(n)}$, where $\Gcal_{d}(n)$ denotes the set of $d$-regular graphs on $n$ nodes. In particular, using the results in~\cite{wormald2017}, one can show that
\[
	\lim_{n \to \infty} \frac{\log \abs{\Gcal_{d}(n)}}{\frac{dn}{2} \log \frac{n}{d+1}} = 1
\]

The $O(1/\log(n/d))$ terms are $o(1)$ as long as $d = o(n)$. The $c \log d / \log(n/d)$ term on the right hand side is only $o(1)$ if $\log d = o(\log n)$. Unfortunately, for $d$ polynomial in $n$ we do not get a sharp logarithmic rate.

\subsubsection{Structure of $d$-regular graph with many triangles}

For fixed $d$, it turns out, perhaps not so surprisingly, that in most elements of $\Gcal_{d,c}(n)$, most of the triangles cluster into (disjoint) $d+1$-cliques. To make this statement precise, let us call a node \emph{bad} if it is not part of a $d+1$-clique but it is incident to at least one triangle. 

\begin{theorem}\label{thm:structure_first} Let $d$ be \emph{fixed} and $0<c<1$. With high probability a uniformly randomly chosen element of $\Gcal_{d,c}(n)$ has less than $ \frac{\log\log n}{ \log n}  n$ bad nodes. Thus, the number of triangles that are not part of a $d+1$-clique is sublinear.
\end{theorem}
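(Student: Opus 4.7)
The plan is to show that $A(b_0)$, the number of graphs in $\Gcal_{d,c}(n)$ with at least $b_0 = n\log\log n/\log n$ bad vertices, satisfies $A(b_0) = o(|\Gcal_{d,c}(n)|)$. Given $G\in\Gcal_{d,c}(n)$, decompose $V(G)=V_C\sqcup V_B\sqcup V_T$, where $V_C$ is the union of the $K_{d+1}$ connected components (necessarily full connected components, since every such vertex uses all $d$ of its edges inside the clique), $V_B$ is the set of bad vertices, and $V_T$ is the triangle-free remainder. Write $m=|V_C|$, $k=m/(d+1)$, $b=|V_B|$. Every triangle of $G$ lies either inside some $K_{d+1}$-component (contributing $k\binom{d+1}{3}=m\binom{d}{2}/3$ triangles in total) or entirely inside $V_B$ (since $V_T$ is triangle-free and $V_C$ has no outgoing edges), so the trivial bound of $\binom{d}{2}/3$ triangles per bad vertex gives $T(G)\leq(m+b)\binom{d}{2}/3$. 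Hence the hypothesis $T(G)\geq c\,T_{\max}$ forces $m+b\geq cn$.

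For a fixed pair $(m,b)$ the number of contributing graphs is at most
\[
\binom{n}{m}\frac{m!}{((d+1)!)^{k}k!}\binom{n-m}{b}\,P(n-m,d,b),
\]
where $P(n-m,d,b)$ counts $d$-regular graphs on $n-m$ vertices having a prescribed $b$-subset of vertices each in a triangle, with total triangle count at least $T^{*}=\binom{d}{2}(cn-m)/3$. Applying Theorem~\ref{thm:counting} to the rest gives $P(n-m,d,b)\leq|\Gcal_{d,c'}(n-m)|$ where $c'=(cn-m)/(n-m)$. A short log-count reveals that $\binom{n}{m}\frac{m!}{((d+1)!)^{k}k!}\,|\Gcal_{d,c'}(n-m)|$ has leading $\log$-asymptotic $(d/2)(1-c(d-1)/(d+1))\,n\log(n/(d+1))$ independently of $m\in[cn-b,cn]$, and is strictly smaller for $m>cn$; this matches the main term of $|\Gcal_{d,c}(n)|$. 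Consequently the desired suppression has to come entirely from the ratio $\binom{n-m}{b}P(n-m,d,b)/|\Gcal_{d,c'}(n-m)|$, i.e.\ from the probability that $b$ specified vertices in a uniform element of $\Gcal_{d,c'}(n-m)$ happen to all be bad.

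By the structural picture that emerges inductively for $\Gcal_{d,c'}(n-m)$ (typical elements split as $K_{d+1}$-components on $\sim c'(n-m)$ vertices plus an essentially triangle-free $d$-regular graph on $\sim(1-c)n$ vertices), a prescribed vertex is bad only when it lies in the non-clique part and some pair among its $d$ neighbors is an edge, which occurs with probability $O(d^{3}/n)$ in a uniform $d$-regular graph since each pair is an edge with probability $O(d/n)$. Treating the $b$ bad events as essentially independent via a configuration-model switching argument gives $P(n-m,d,b)\leq|\Gcal_{d,c'}(n-m)|\cdot(C_{d}/n)^{b}$ with $C_{d}=C_{d}(d,c)$, and combined with $\binom{n-m}{b}\leq(e(n-m)/b)^{b}$ produces an overall suppression of $(eC_{d}/b)^{b}$. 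Summing over $m$ and comparing with the lower bound for $|\Gcal_{d,c}(n)|$ from Theorem~\ref{thm:counting} yields
\[
\frac{A(b_{0})}{|\Gcal_{d,c}(n)|}\;\leq\; n\cdot(eC_{d}/b_{0})^{b_{0}}\cdot e^{O(n)} \;=\; \exp\!\big(O(n)-b_{0}\log(b_{0}/eC_{d})\big),
\]
which is $o(1)$ because $b_{0}\log b_{0}=(1+o(1))\,n\log\log n$ dominates $O(n)$. The main obstacle is to make the probability bound on $P(n-m,d,b)/|\Gcal_{d,c'}(n-m)|$ rigorous and uniform in $m$ and $c'$: the dependence among witness triangles at different bad vertices must be absorbed by a configuration-model or switching argument analogous to those used for the upper bound in Theorem~\ref{thm:counting}, but the final exponent $b_{0}\log n \gg n$ leaves polylogarithmic slack, so the argument is robust to polynomial corrections absorbed into $C_{d}$.
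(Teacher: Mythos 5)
Your reduction to the block count over $(m,b)$ and the deterministic inequality $m+b\ge cn$ are fine, but the argument collapses at exactly the step you flag as the main obstacle: the bound $P(n-m,d,b)\le|\Gcal_{d,c'}(n-m)|\cdot(C_d/n)^{b}$. The two justifications you offer do not work. First, "the structural picture that emerges inductively" is circular: that picture is the statement being proved, and no actual induction is set up (no base case, no decreasing parameter, no uniformity in $c'$ and $n-m$). Second, the estimate that a prescribed vertex is bad with probability $O(d^{3}/n)$ "since each pair of its neighbours is an edge with probability $O(d/n)$" is an \emph{unconditional} computation in the uniform $d$-regular model; under the conditioning on at least $c'\cdot\tm$ triangles the measure is exponentially tilted, and the probability that a prescribed vertex is incident to a triangle is of order $c'$, not $O(1/n)$. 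What is small is the probability of being in a triangle \emph{without} being in a clique — but even if you could assume the structure theorem, it only gives a per-vertex bound of $o(1)$ (indeed only $O(\log\log n/\log n)$ at the stated strength), nowhere near the $O(1/n)$ needed to produce the $(C_d/n)^{b}$ suppression. The "switching argument" that would make $b$ such events nearly independent under this exponentially rare conditioning is not routine and is not supplied. So the proof does not close, and I do not see how to repair it along these lines without first proving something at least as strong as the theorem itself.

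The paper takes a different, purely entropic route that avoids any conditional probability estimate. Call an edge $\delta$-bad if $1\le t_e\le d-1-\delta d$, where $t_e$ is the number of triangles containing $e$; with $\delta=1/d$ one checks that every bad node is incident to at least two such edges, so a graph with more than $\ep n$ bad nodes has at least $\ep n$ bad edges. The random-relabeling/configuration-ordering argument from the upper bound of Theorem~\ref{thm:counting} is then rerun: each bad edge contributes an extra $\tfrac{1}{3}\cdot\tfrac{\delta d}{d+1}$ to the expected number of triangle-revealing edges beyond the baseline $t_e/(d+1)$, and Lemma~\ref{lem:phi_inverse} converts this surplus into the bound that such graphs number at most $\exp\left(-\tfrac{\ep}{3d+3}n\log n+O(n)\right)$ times $\abs{\Gcal_{d,c}(n)}$ (Lemma~\ref{lem:badness}). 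This is $o(1)$ precisely when $\ep\log n\to\infty$, which $\ep=\log\log n/\log n$ satisfies. If you want to keep a first-moment framework, the quantity you would need to control is exactly this count of graphs with many "intermediate-$t_e$" edges; I recommend adopting that mechanism rather than trying to make the $(C_d/n)^{b}$ claim rigorous.
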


In Section~\ref{ssec:structure_proofs} we prove a slightly more general result where we consider the case where a uniformly randomly chosen element of $\Gcal_{d,c}(n)$ has less than $ \varepsilon_n n$ bad nodes, with $\varepsilon_n \to 0$, such that $\varepsilon_n \log n \to \infty$. 

Note that Theorem~\ref{thm:structure_first} hints at a graph structure similar to the bipodal case, where instead of two components, we now have a linear in $n$ number of cliques and some remaining larger graph with a sub-linear number of triangles. 

We prove a similar result for the $1 \ll d \ll n$ case. Here, however, we cannot expect $d+1$-cliques to appear, as it is possible to construct families of examples with the correct leading logarithmic growth rate, that don't have any cliques. Instead, we introduce a notion of a pseudo-clique, which turns out to be a very dense subgraph of size $d+o(d)$ with the property that different pseudo-cliques must be disjoint. (See the explanation at the beginning of Section~\ref{sec:growing} for details.) It turns out that a typical element of the ensemble consists of a collection of these pseudo-cliques together with an almost triangle-free part.

\begin{theorem}\label{thm:pseudo_structure_first}
Let $1\ll d \ll n$ and fix $0 < c < 1$. With high probability, almost all triangles of a uniformly randomly chosen element of $\Gcal_{d,c}(n)$ are contained in pseudo-cliques. 
\end{theorem}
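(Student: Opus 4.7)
The plan is to follow the template used for the fixed-$d$ structural result (Theorem~\ref{thm:structure_first}), replacing $(d+1)$-cliques with pseudo-cliques. Let $G$ be drawn uniformly from $\Gcal_{d,c}(n)$, fix $\varepsilon>0$, and let $A_\varepsilon$ denote the event that at least $\varepsilon\, c\, \tm$ triangles of $G$ lie outside every pseudo-clique. It suffices to show $\Prob{A_\varepsilon}=\smallO{1}$ for each fixed $\varepsilon$, and then to let $\varepsilon$ tend to $0$ slowly to obtain the "almost all triangles" statement. Since $\Prob{A_\varepsilon}=\abs{\Gcal_{d,c}(n)\cap A_\varepsilon}/\abs{\Gcal_{d,c}(n)}$, and Theorem~\ref{thm:counting} already pins down the denominator at $\exp\bigl((1-c+\smallO{1})\tfrac{dn}{2}\log\tfrac{n}{d+1}\bigr)$ in the regime $1\ll d\ll n$, the task reduces to an upper bound on the numerator by a strictly smaller exponential.

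To do this I would encode each $G\in A_\varepsilon$ by first specifying its (pairwise vertex-disjoint) pseudo-clique vertex sets $P_1,\dots,P_k$, together with the edges inside the $P_i$, and then specifying the remainder of the graph. The first step costs at most $\exp\bigl(\smallO{\tfrac{dn}{2}\log\tfrac{n}{d+1}}\bigr)$, since the pseudo-cliques cover at most $n$ vertices and each has size $d+\smallO{d}$, so positioning and filling them is a lower-order contribution. The second step is controlled by the efficiency estimate that already underlies the upper bound in Theorem~\ref{thm:counting}: any triangle supported outside the pseudo-cliques uses its edges strictly less efficiently than one inside, in the sense that producing the $\varepsilon\, c\, \tm$ "outside" triangles requires a factor $(1+\eta(\varepsilon))$ more edges per triangle than inside a pseudo-clique. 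Feeding this edge surplus through the same configuration-model bound on the number of $d$-regular graphs used in the proof of Theorem~\ref{thm:counting} produces a strict loss $\delta(\varepsilon,c)>0$ in the exponent, i.e.\
\[
\abs{\Gcal_{d,c}(n)\cap A_\varepsilon}\leq \exp\!\bigl((1-c-\delta+\smallO{1})\tfrac{dn}{2}\log\tfrac{n}{d+1}\bigr),
\]
which gives $\Prob{A_\varepsilon}\leq e^{-\delta\,\tfrac{dn}{2}\log(n/(d+1))\,(1+\smallO{1})}=\smallO{1}$.

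The main obstacle is establishing the efficiency estimate with a genuine $\varepsilon$-dependent gap uniformly in $d$ throughout the slowly growing regime $1\ll d\ll n$. In the fixed-$d$ setting the extremal configuration (a $(d+1)$-clique) is rigid, so any deviation is detectable locally by a simple edge-triangle count. Pseudo-cliques are only asymptotically extremal, so the $\smallO{d}$ slack in their internal edge count has to be traded against the potential savings from irregular but dense structures outside; one must rule out that those savings compensate the loss. I expect to do this with a Kruskal--Katona-flavoured bound on the number of triangles supportable on a given edge set with maximum degree $d$, combined with the disjointness property built into the pseudo-clique definition (which blocks the obvious overlap-based attempt to cheat the bound). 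Once that quantitative lemma is in place, the remainder of the argument is a bookkeeping exercise parallel to the proof of Theorem~\ref{thm:structure_first}.
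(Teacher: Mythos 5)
Your high-level plan (bound the number of graphs with many triangles outside pseudo-cliques by a quantity exponentially smaller than $\abs{\Gcal_{d,c}(n)}$) has the right shape, but the step you yourself flag as the ``main obstacle'' --- the efficiency estimate asserting that triangles outside pseudo-cliques cost a factor $(1+\eta(\varepsilon))$ more edges per triangle --- is the entire mathematical content of the theorem, and your proposal does not supply it. As stated it is not even clearly true in the form you need: a subgraph on $d+O(1)$ vertices missing a sparse set of edges supports $(1-o(1))\binom{d}{3}$ triangles at essentially the same edge cost as $K_{d+1}$, so whether such configurations count as ``inside'' or ``outside'' depends entirely on how pseudo-cliques are defined, and a Kruskal--Katona-type bound on triangles per edge under a degree constraint will not by itself separate the two cases with a quantitative gap that survives the $O(dn\log d)$ error terms in the counting bound. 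Moreover, the bridge from ``more edges per triangle'' to a loss in the count is itself nontrivial: the bound of Lemma~\ref{lem:phi_inverse} is governed by the number of triangle-\emph{revealing} edges in the configuration order, whose expectation over random relabelings is $\sum_e t_e/(t_e+2)$, not by any direct edge-per-triangle ratio, so your edge surplus does not feed into that machinery without further work.

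The paper takes a different, more local route that avoids any global extremal statement. It calls an edge $\delta$-bad if $1\le t_e\le d-1-\delta d$ and uses the pointwise inequality $\frac{t_e}{t_e+2}\ge \frac{t_e}{d+1}+\frac{1}{3}\cdot\frac{\delta d}{d+1}$ for such edges; plugging this surplus into the expected-reveal-count computation of Lemma~\ref{lem:expected_triangle_count} shows (Lemma~\ref{lem:badness}) that graphs with at least $\ep (d/2)n$ $\delta$-bad edges are exponentially rare within $\Gcal_{d,c}(n)$, for the choice $\ep=\delta^2\asymp(\log d/\log(n/d^2))^{1/2}$. The remainder is deterministic: in a graph with few $\delta$-bad edges, after discarding edges with $t_e=0$, every good edge between good nodes lies in at least $(1-\delta)d$ triangles, which forces it into a ``dense spot''; elementary degree counting shows dense spots are either disjoint or almost coincide, merge into pairwise disjoint pseudo-cliques of size $(1+O(\delta))d$, and the triangles touching bad edges or bad nodes number only $O(\ep+\delta)\cdot\tm$. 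If you want to salvage your outline, replace the global efficiency estimate by this per-edge dichotomy (each edge is in either $0$, nearly $d$, or an ``intermediate'' number of triangles, and intermediate edges are provably rare) --- that is the missing idea.
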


\subsubsection{$d$-regular graph with many $k$-cliques}

As a corollary to our methods, we also obtain similar results for regular graphs with many $k$-cliques. Let $\Gcal_{d,c,k}(n)$ denote the set of $d$-regular graphs on $n$ nodes that contain at least $c\cdot T_{k,\max} = c \tbinom{d}{k-1} n/k$ subgraphs isomorphic to $K_k$. As a natural extension of terminology, we call nodes \emph{bad} if they are not part of a $d+1$-clique but are incident to a $k$ clique.

%See Theorem~\ref{thm:k_clique} in Section~\ref{sec:k_clique} for the precise statement. 

\begin{theorem}\label{thm:k_clique}
For  $k \ge 3$ and fixed $0 < c < 1$ we have
\[  
	\abs*{\frac{\log \abs{\Gcal_{d,c,k}(n)}}{(d/2)n \log n} - \left(1 - c \cdot \frac{d-1}{d+1}\right)} = O(\log d /\log n)
\] 
Furthermore, \emph{for $d$ fixed}, almost all elements of $\Gcal_{d,t_k,k}(n)$ will have at most $\ep n$ bad nodes.
\end{theorem}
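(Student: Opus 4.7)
The plan is to push the arguments for Theorems~\ref{thm:counting} and~\ref{thm:structure_first} through with $K_k$ replacing $K_3$ throughout. The engine that makes this possible is the identity that every vertex of a $(d+1)$-clique lies in exactly $\binom{d}{k-1}$ copies of $K_k$, which equals the per-vertex maximum $kT_{k,\max}/n$. Thus a disjoint union of $(d+1)$-cliques remains the extremal local configuration for $K_k$ just as for triangles, and the combinatorial accounting of ``how many vertices must be covered by cliques to realize $c T_{k,\max}$ copies of $K_k$'' yields exactly the same fraction $c$ of the vertex set as in the triangle case. In particular the coefficient $1 - c(d-1)/(d+1)$ is not special to triangles and arises for the same reasons.

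For the lower bound on $\abs{\Gcal_{d,c,k}(n)}$, I would construct graphs by selecting a set $S$ of $cn$ vertices, partitioning $S$ into $\floor{cn/(d+1)}$ disjoint $(d+1)$-cliques, and placing an arbitrary $d$-regular graph on $V \setminus S$. The clique-partition contributes roughly $\frac{d}{d+1}\, cn \log n$ to the log-count (the factorials collapse via Stirling exactly as in the triangle proof), while Wormald's enumeration \cite{wormald2017} gives $\frac{d(1-c)n}{2}\log\frac{(1-c)n}{d+1}$ for the free regular part. After absorbing lower-order terms these sum to $\frac{dn}{2}\log n\,\bigl(1 - c\tfrac{d-1}{d+1}\bigr)$, matching the target.

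For the matching upper bound I would rerun the configuration-model / switching argument underlying Theorem~\ref{thm:counting} with triangles replaced by $K_k$. The role played there by the per-vertex triangle bound $\binom{d}{2}$, saturated precisely when $v$ lies in a $(d+1)$-clique, is now played by the identical bound $\binom{d}{k-1}$ on the number of $K_k$'s through $v$, again saturated only inside a $(d+1)$-clique. I expect this step to be the main obstacle: the local $k$-clique inequalities are more delicate than the triangle ones, since one has to control how $K_k$'s through a vertex are distributed across its neighborhood without the convenient one-edge-at-a-time structure of triangles. These inequalities remain strictly local, however, so the global switching portion of the triangle proof transfers without alteration, giving the counting claim up to the claimed $O(\log d/\log n)$ slack.

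The structural statement for fixed $d$ then follows the strategy of Theorem~\ref{thm:structure_first}. Assuming for contradiction that with non-vanishing probability a uniform element of $\Gcal_{d,c,k}(n)$ has at least $\ep n$ bad nodes, I would enumerate such graphs by first choosing the set of bad vertices and then counting over the remaining structure; because a bad vertex is strictly less efficient at producing $K_k$'s per unit of entropy than a clique vertex (it hosts fewer than $\binom{d}{k-1}$ copies of $K_k$ while still consuming $d$ half-edges), the count drops by a factor $\exp(-\Omega(\ep n \log n))$. Comparing with the sharp lower bound from the construction above contradicts the hypothesis and forces the bad-node count to be at most $\ep n$ with high probability.
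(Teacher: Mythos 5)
Your lower bound is essentially the paper's: the same family of $\lfloor cn/(d+1)\rfloor$ disjoint $(d+1)$-cliques plus a free $d$-regular remainder, counted the same way; the only observation needed is that this family contains $b\binom{d+1}{k} = c\binom{d}{k-1}\frac{n}{k}$ copies of $K_k$, so it sits inside $\Gcal_{d,c,k}(n)$. The upper bound and the structural claim, however, are where your proposal has a genuine gap. You propose to rerun the configuration-model reveal-profile argument with $K_k$ in place of $K_3$, and you yourself flag the key step as ``the main obstacle'' without resolving it. That obstacle is real: the analogue of Lemma~\ref{lem:expected_triangle_count} does not transfer cleanly. For triangles, the event that edge $e=(ij)$ reveals no triangle is exactly the event that $i$ or $j$ is $\sigma$-minimal among $\{i,j,v_1,\dots,v_{t_e}\}$, giving the closed form $\Prob{X_e(\sigma)=0}=2/(t_e+2)$; for $K_k$ the corresponding event involves, for each copy of $K_k$ through $e$, the joint order statistics of $k-2$ other vertices, and these events are correlated across overlapping cliques. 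Nothing in your sketch supplies the needed lower bound on $\Exp{\abss{\phi(\Gs_\sigma)}}$ in this setting, so the counting claim is not established.

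The paper avoids all of this with a one-line reduction that your proposal misses: double counting gives that any $G$ with at least $c\binom{d}{k-1}\frac{n}{k}$ copies of $K_k$ has at least
\[
c \binom{d}{k-1}\frac{n}{k}\cdot\frac{\binom{k}{3}}{\binom{d-2}{k-3}} = c\binom{d}{2}\frac{n}{3} = c\cdot \tm
\]
triangles, since each $K_k$ contains $\binom{k}{3}$ triangles and each triangle lies in at most $\binom{d-2}{k-3}$ copies of $K_k$. Hence $\Gcal_{d,c,k}(n)\subseteq\Gcal_{d,c}(n)$, and the upper bound is inherited verbatim from Theorem~\ref{thm:counting}. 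The structural statement is inherited the same way: a node incident to a $K_k$ is incident to a triangle, so bad nodes in the $k$-clique sense are bad in the triangle sense, and the set of elements of $\Gcal_{d,c,k}(n)$ with more than $\ep n$ bad nodes is contained in the corresponding subset of $\Gcal_{d,c}(n)$, whose cardinality Lemma~\ref{lem:badness} already shows to be negligible against the lower bound on $\abs{\Gcal_{d,c,k}(n)}$. Your alternative ``choose the bad vertices first'' argument for the structural part is likewise only a heuristic as written; the inclusion above makes it unnecessary. I would recommend replacing your upper-bound and structural arguments with this reduction.
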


%%%%%%%%%
\section{Proofs}
%%%%%%%%%

%%%%%%%%%
\subsection{The number of regular graphs with a given number of triangles}
%%%%%%%%%

The proof of Theorem~\ref{thm:counting} consist of establishing a lower and upper bound on $\log|\Gcal_{d,c}(n)|$. More precisely, we will show that
\[
	  - \bigO{dn} \le \log|\Gcal_{d,c}(n)| - \left(1 - c\cdot \frac{d-1}{d+1}\right)\frac{dn}{2} \log \frac{n}{d+1}
	\le  c \frac{dn}{2} \log d +O(dn).
\]
The theorem then follows after dividing by $\frac{dn}{2} \log \frac{n}{d+1}$ and letting $n\to \infty$. 
%Note that, when $\log n = O(d \log d)$, a part of the main term will be swallowed up in the error term, and the leading term can be simply written as $(1-c) (d/2) n \log n$.  

\begin{proof}[Proof of Theorem~\ref{thm:counting} (Lower bound)]
To establish a lower bound we construct a family of elements in $\Gcal_{d,c}(n)$ by letting 
\[
	b =  c \cdot \tm \cdot \tbinom{d+1}{3}^{-1} =  \frac{c \cdot n}{d+1},
\]
taking $b$ disjoint $d+1$-cliques and an arbitrary $m = n-(d+1)b = (1-c)n$ node $d$-regular graph. Clearly, these graphs will have at least $c\cdot \tm$ triangles. 

The number of $d$-regular graphs on $m$ nodes satisfies
\[\abs{\Gcal_d(m)} \sim e^{1/4}\binom{m-1}{d}^m \binom{\tbinom{m}{2}}{\frac{dm}{2}}\binom{m(m-1)}{md}^{-1}\] for any $d = d(m) \le m - 2$, as $m \to \infty$, see~\cite{wormald2017}. Using the standard $(a/b)^b \leq \tbinom{a}{b} \leq (ea/b)^b$ bounds, it is easy to obtain

\begin{equation}\label{eq:gdn_asymptotic}
\log \abs{\Gcal_d(m)} \geq \frac{1}{2}dm\log\frac{m}{d+1} - dm - O(1).
\end{equation}
The size of our family of graphs thus satisfies
\[ \abs{\Gcal_{d,c}(n)} = \frac{\binom{n}{d+1}\binom{n-(d+1)}{d+1}\cdots \binom{n-(b-1)(d+1)}{d+1}}{b!} \abs{\Gcal_d(m)} = \frac{n!}{m! b! (d+1)!^b} \abs{\Gcal_d(m)}. \]
Again a simple computation using the $\abs{\log k! - (k \log k - k + 1/2 \log k)}  \leq  O(1)$ approximation, and noting that $n = m+ b(d+1) = (1-c)n + b(d+1)$, gives
\begin{align*}
	\log \abs{\Gcal_{d,c}(n)} 
	&\geq n \log n  - n +\frac{1}{2}\log n - (m \log m - m + \frac{1}{2}\log m) \\ 
	&- b((d+1) \log (d+1) - (d+1) + \frac{1}{2} \log (d+1)) - ( b \log b - b + \frac{1}{2} \log b) \\
	&+  \frac{d}{2}m\log m - \frac{d}{2} m \log d - dm \\
	&= cn\log\frac{m}{d+1} + \frac{1-c}{2} dn \log\frac{m}{d+1} - \frac{cn}{d+1} \log\frac{m}{d+1} - O(dn)\\
  	&= \left(c- \frac{c}{d+1} + \frac{(1-c)\cdot d}{2}\right)n\log \frac{m}{d+1} - O(dn) \\ 
	&= \left(\frac{2c}{d+1}  + 1- c\right)\frac{d}{2}n\log\frac{m}{d+1} -O(dn)\\ 
	& = \left(1- c\cdot \frac{d-1}{d+1}\right)\frac{d}{2}n \log\frac{m}{d+1} - \bigO{dn} \\
	& = \left(1- c\cdot \frac{d-1}{d+1}\right)\frac{d}{2}n \log\frac{n}{d+1} - \bigO{dn}. 
\end{align*}
We have used $b = \frac{n-m}{d+1} = \frac{cn}{d+1}$ and hence $\log b = \log \frac{m}{d+1} + O(1)$, and similarly $\log m = \log n + O(1)$.
\end{proof}

We now need to prove a matching upper bound on $\abs{\Gcal_{d,c}(n)}$. We do this by uncovering the edges of such graphs in a suitably chosen order, and recording whether in each step a new triangle is created. We will define a function \[\phi : \Gcal_{d,c}(n) \to \{0,1\}^{nd/2}\] that will record which edges of $G$ create triangles when added in this order.

We use an approach inspired by the configuration model. Let us denote by $\Gcals_d(n)$ (respectively, $\Gcals_{d,c}(n)$) the set of $d$-regular graphs (respectively, $d$-regular graphs with at least $c\cdot \tm$ triangles) on $n$ labeled nodes, where additionally the edges leaving each node are assigned labels 1 through $d$. This means that each edge gets two labels, one from each end. 
 
Given $\Gs \in \Gcals_{d,c}(n)$, we define the a \emph{configuration ordering} $\prec$ on the set of edges of $\Gs$ as follows. Let $e = (i_1 j_1)$ and $f = (i_2 j_2)$ be two edges of $\Gs$ with $i_1 < j_1$ and $i_2 < j_2$.  Let us declare $e \prec f$ if 
$i_1 < i_2$, or if $i_1 = i_2$ and the label of $e$ is smaller than the label of $f$ at their common node.  Let $e_1 \prec e_2 \prec \dots \prec e_{nd/2}$ denote the edges of $\Gs$ in increasing configuration order. Let $\Gs[k]$ denote the subgraph of $\Gs$ consisting of $e_1, \dots, e_k$. 

Finally define $\phi(\Gs)(k) = 1$ if $e_k$ is incident to a triangle in $\Gs[k]$ and 0 otherwise. Denoting $e_k = (ij)$, it is clear that we have $\phi(\Gs)(k) = 1$ if and only if there is a triangle $(hij)$ in $\Gs$ such that $h < \min(i,j)$. 

For any $x \in \{0,1\}^{nd/2}$ let us denote $\abss{x} = \sum_{j=1}^{nd/2} x(j)$. Then $\abss{\phi(\Gs)}$ denotes the total number of edges $e_{k}$ that upon adding to the graph $\Gs[k-1]$ have created at least one new triangle. Moreover, any vector $x \in \{0,1\}^{nd/2}$ describes a profile of which edges revealed a new triangle. The next lemma gives an upper bound on the number of graphs in $\Gcals_{d,c}(n)$ with a given triangle reveal profile.

\begin{lemma}\label{lem:phi_inverse}
\[ \abs*{ \phi^{-1}(x) } \leq  (dn)^{dn/2} \cdot  \left(\frac{d^2}{n}\right)^{\abss{x}}\]
\end{lemma}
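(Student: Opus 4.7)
The plan is to bound $|\phi^{-1}(x)|$ by reconstructing any $G^{*} \in \phi^{-1}(x)$ edge by edge in the configuration order, tracking how many choices are available at each step. I would argue that given $e_1 \prec \cdots \prec e_{k-1}$ together with the value of $x(k)$, the number of possibilities for $e_k$ is at most $nd$ if $x(k)=0$ and at most $d^{3}$ if $x(k)=1$. Multiplying these local bounds over all $dn/2$ edges would give
\[
|\phi^{-1}(x)| \le (nd)^{dn/2 - \|x\|} (d^{3})^{\|x\|} = (nd)^{dn/2} \left(\frac{d^{2}}{n}\right)^{\|x\|},
\]
which is exactly the claim.

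The first observation is that once $e_1,\dots,e_{k-1}$ are fixed, both the smaller endpoint $i$ of $e_k$ and the label of $e_k$ at $i$ are forced by the configuration ordering: $i$ is the smallest vertex that still has an unassigned label, and the label of $e_k$ at $i$ is the smallest label at $i$ not already used by an earlier edge. Hence the only genuine choices for $e_k$ are its other endpoint $j>i$ and the label of $e_k$ at $j$. If $x(k)=0$ I use only the trivial bounds: at most $n$ choices for $j$ and at most $d$ choices for the label at $j$, giving $nd$ in total.

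The case $x(k)=1$ is where the triangle structure is used. By definition of $\phi$, there exists $h<i$ with $(h,i)$ and $(h,j)$ both in $G^{*}[k-1]$. So $h$ is a neighbor of $i$ with index smaller than $i$, and because $G^{*}$ is $d$-regular and these edges appear in $G^{*}[k-1]$, there are at most $d$ choices for $h$. For each such $h$, again by $d$-regularity, there are at most $d$ choices for $j$ among the neighbors of $h$ (all of which are already determined in $G^{*}[k-1]$, since $h<i$ implies every edge at $h$ precedes $e_k$). The label of $e_k$ at $j$ contributes a further factor of at most $d$, yielding $d\cdot d\cdot d = d^{3}$.

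The only slightly subtle point, which I would state explicitly, is that this forward reconstruction genuinely enumerates $\phi^{-1}(x)$: one has to check that at step $k$ the identity of $i$ and the label at $i$ are determined by $e_1,\dots,e_{k-1}$ alone and not by extra knowledge of $G^{*}$. This follows directly from the definition of the configuration ordering $\prec$, so it is not a real obstacle. Beyond that the argument is purely combinatorial and the product bound gives the lemma.
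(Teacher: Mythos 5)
Your proposal is correct and follows essentially the same reconstruction argument as the paper: reveal the edges in configuration order, note that the starting half-edge (smaller endpoint and its label) is forced, and bound the choices for the ending half-edge by $nd$ in general and by $d^3$ when a triangle must be created (you factor this as $d$ choices for the earlier common neighbor $h$, $d$ for $j$, $d$ for the label, while the paper says at most $d^2$ second neighbors times $d$ half-edges each --- the same count). No gaps.
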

\begin{proof}
The idea is to reconstruct a $\Gs \in \phi^{-1}(x)$ by starting from the empty graph and adding edges 1-by-1, according to the configuration order. Just like in the configuration model, each node starts with $d$ half-edges, labeled 1 through $d$. First we take the half-edge with label 1 at node 1, and join it to any other half-edge. We can do this in $dn -1$ ways. Then, in each subsequent step, we take the smallest node that still has half-edges, pick the one with the smallest label, and match it to any another half-edge. If we didn't have constraints on triangles, the total number of possible (multi-)graphs we could create this way would be $(dn-1)(dn-3)\cdots 3\cdot 1$, which is an upper bound on $|\Gcals_d(n)|$. 

In our case, the vector $x$ dictates whether the next edge added has to create a triangle with previously added edges. By the definition of the configuration order, the number of possible choices for the $k$th edge is $dn-(2k-1)$, as the starting half-edge is fixed and there are exactly $dn-(2k-1)$ available half-edges at this step. However, when $x(k) = 1$, the number of choices for the ending half-edge is limited. Suppose the starting half-edge is incident to node $j$. Then, in order for this edge to create a triangle, the ending half-edge most be incident to one of the current 2nd neighbors of $j$. There are never more than $d^2$ second neighbors, and thus never more than $d^3$ possible half-edges to choose from.

Thus we get the upper bound 
\begin{align*} \abs{\phi^{-1}(x)} & \leq \prod_{j: x(j) = 0} (dn-(2j-1)) \cdot \prod_{j: x(j)=1} d^3 
\\ & \leq d^{3\abss{x}}\cdot (dn)^{dn/2-\abss{x}} 
\\ & = (dn)^{dn/2} \cdot  \left(\frac{d^2}{n}\right)^{\abss{x}}\end{align*} which proves the lemma.
\end{proof}

The main idea for the upper bound is now to consider a specific set of triangle reveal profiles $x \in \{0,1\}^{nd/2}$, in which at least a $c\frac{d-1}{d+1}$ fraction of edges have revealed triangles.

\begin{proof}[Proof of Theorem~\ref{thm:counting} (Upper bound)]
Let us 
introduce the following short hand notation,
\begin{equation}\label{eq:tc} T_c = c\cdot \frac{dn}{2} \frac{d-1}{d+1},
\end{equation} 
as it will come up frequently. Define 
\[
	L = \left\{ x \in \{0,1\}^{\frac{nd}{2}} : \abss{x} \geq T_c - 1 \right\}. 
\]
Then, by Lemma~\ref{lem:phi_inverse}, and using $d^2 \leq n$, we see that 
\begin{equation}\label{eq:L_inverse_bound} 
	\abs*{\phi^{-1}(L)} \leq \abs{L} (dn)^{dn/2} \left(\frac{d^2}{n}\right)^{T_c - 1} \leq 2^{dn/2}(dn)^{dn/2} \left(\frac{d^2}{n}\right)^{T_c - 1}
\end{equation}
To finish the proof, we will show that $\abs*{\Gcals_{d,c}(n)} \leq \frac{dn}{2} \abs{\phi^{-1}(L)}$. 
For this, consider the symmetric group $S_n$, which acts on $\Gcals_{d,c}(n)$ by permuting the node labels. For $\sigma \in S_n$ and $\Gs \in \Gcals_{d,c}(n)$, let us denote by $\Gs_\sigma$ the graph obtained by applying $\sigma$ to the node labels. Furthermore let $S_n\Gs = \{\Gs_\sigma : \sigma \in S_n\} \subset \Gcals_{d,c}(n)$ denote the orbit of $\Gs$ under the action of $S_n$. We finish the proof modulo the following result, which we establish at the end of this section.

\begin{lemma}\label{lem:orbit_bound}
For any $\Gs \in \Gcals_{d,c}(n)$ we have 
\[ 
	\abs*{ S_n \Gs \cap \phi^{-1}(L) } \geq \frac{2}{dn} \abs{ S_n \Gs } 
\]
In other words, randomly relabeling the nodes of $G^*$ yields, with not too small probability, a graph whose $\phi(G^*_\sigma) \geq T_c-1$.
\end{lemma}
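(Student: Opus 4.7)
The plan is to convert the claim to a probability statement and then apply a reverse Markov argument. By orbit--stabilizer, for $\sigma$ uniformly random in $S_n$ we have $\pr_\sigma[\Gs_\sigma \in \phi^{-1}(L)] = \abs{S_n \Gs \cap \phi^{-1}(L)}/\abs{S_n \Gs}$, so it suffices to lower bound this probability by $2/(dn)$.

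The main step is to compute $\e_\sigma[\abss{\phi(\Gs_\sigma)}]$ edge by edge. An edge $\{a,b\} \in E(\Gs)$ contributes $1$ to $\abss{\phi(\Gs_\sigma)}$ exactly when some common neighbor $c \in N(a) \cap N(b)$ receives a relabel below both $\sigma(a)$ and $\sigma(b)$. Writing $t(ab) := \abs{N(a) \cap N(b)}$ for the number of triangles through $\{a,b\}$, the $t(ab)+2$ values $\sigma(a), \sigma(b), \sigma(c_1), \ldots, \sigma(c_{t(ab)})$ are uniformly randomly ordered, so by symmetry the probability that the minimum lands on one of the $c_i$'s equals $t(ab)/(t(ab)+2)$. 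Summing over edges and using $\sum_{\{a,b\}} t(ab) = 3T(\Gs)$ (each triangle contributes to three of its edges) gives
\[
\e_\sigma\bigl[\abss{\phi(\Gs_\sigma)}\bigr] \;=\; \sum_{\{a,b\} \in E(\Gs)} \frac{t(ab)}{t(ab)+2} \;\geq\; \frac{1}{d+1}\sum_{\{a,b\} \in E(\Gs)} t(ab) \;=\; \frac{3T(\Gs)}{d+1} \;\geq\; \frac{3c\, \tm}{d+1} \;=\; T_c,
\]
where the middle inequality uses $t(ab) \leq d-1$ (by $d$-regularity, since $N(a) \cap N(b) \subseteq N(a) \setminus \{b\}$), which forces $t/(t+2) \geq t/(d+1)$ on $[0, d-1]$.

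The remainder is a routine reverse Markov: with $X := \abss{\phi(\Gs_\sigma)}$ we have $0 \leq X \leq dn/2$ deterministically and $\e_\sigma[X] \geq T_c$, so splitting the expectation at the threshold $T_c - 1$ yields
\[
T_c \;\leq\; \e_\sigma[X] \;\leq\; (dn/2)\,\pr_\sigma[X \geq T_c - 1] + (T_c - 1),
\]
from which $\pr_\sigma[X \geq T_c - 1] \geq 1/((dn/2) - T_c + 1) \geq 2/(dn)$ for $n$ large (so that $T_c \geq 1$). I do not expect a real obstacle to this argument: the only clever point is the choice of the function $t/(t+2)$, whose pointwise bound $t/(t+2) \geq t/(d+1)$ matches exactly the factor $(d-1)/(d+1)$ built into $T_c$, leaving essentially no slack and making the expectation estimate tight at this level.
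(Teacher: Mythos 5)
Your proposal is correct and follows essentially the same route as the paper: the orbit--stabilizer reduction to a probability over uniform $\sigma$, the edge-by-edge computation $\e[\abss{\phi(\Gs_\sigma)}]=\sum_e t_e/(t_e+2)\geq T_c$ using $t_e\leq d-1$ (this is the paper's Lemma~\ref{lem:expected_triangle_count}), and the same reverse Markov step. No gaps.
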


Summing this inequality over all orbits of the $S_n$ actions yields $\abs*{\Gcals_{d,c}(n)} \leq \frac{dn}{2} \abs{\phi^{-1}(L)}$ as claimed above. Note that $\abs{\Gcals_{d,c}(n)} = \abs{\Gcal_{d,c}(n)} \cdot (d!)^n$. Combining this with \eqref{eq:L_inverse_bound} we get
\begin{align*}
\abs*{\Gcal_{d,c}(n)} &= \frac{\abs{\Gcals_{d,c}(n)}}{(d!)^n} \leq \frac{dn}{2} \frac{\abs{\phi^{-1}(L)}}{(d!)^n} \leq \frac{dn}{2} 2^{dn/2} \frac{ (dn)^{dn/2}}{(d/e)^{dn}} \left(\frac{d^2}{n}\right)^{T_c - 1}
\\ &= \frac{dn}{2} (\sqrt{2}e)^{dn} \left(\frac{n}{d}\right)^{\frac{dn}{2} - T_c + 1} d^{T_c-1}
\\ &= (\sqrt{2}e)^{dn} \left(\frac{n}{d}\right)^{\frac{dn}{2} - T_c} d^{T_c} \frac{n^2}{2d}
\end{align*}
Thus 
\begin{align*} \log \abs{\Gcal_{c,d}(n)} &\leq \left(1-c \cdot \frac{d-1}{d+1}\right)\frac{dn}{2} \log\frac{n}{d} + c\cdot \frac{dn}{2} \log d + O(dn)
\\ &= \left(1-c \cdot \frac{d-1}{d+1}\right)\frac{dn}{2} \log\frac{n}{d+1} + c\cdot \frac{dn}{2} \log d + O(dn)
\end{align*}
\end{proof}

We are thus left to prove Lemma~\ref{lem:orbit_bound}. For this we first show that for a uniform random permutation $\sigma$, the expected value of $\abss{\phi(\Gs_\sigma)}$ is at least $c\cdot \frac{dn(d-1)}{2(d+1)}$. Then the lemma will follow from a standard Markov-inequality argument.

\begin{lemma}\label{lem:expected_triangle_count}
Let $\sigma$ be a uniformly random permutation. Then
\[ \Exp{\abss{\phi(\Gs_\sigma)}} \geq T_c.\]
\end{lemma}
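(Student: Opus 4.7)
\textbf{Proof plan for Lemma~\ref{lem:expected_triangle_count}.} The plan is to apply linearity of expectation, letting $X_e$ be the indicator that edge $e$ of $\Gs$ contributes to $\abss{\phi(\Gs_\sigma)}$ after the random relabeling. Then I will compute $\Exp{X_e}$ directly using the symmetry of a uniform random permutation $\sigma$, and finish with a simple degree-counting inequality. The key auxiliary quantity is $t(e)$, the number of triangles of $\Gs$ containing the edge $e$.

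For the per-edge probability, fix $e = (uv)$ and let $w_1, \dots, w_{t(e)}$ denote the third vertices of the triangles through $e$. By the definition of $\phi$, the edge $e$ contributes to $\abss{\phi(\Gs_\sigma)}$ iff in $\Gs_\sigma$ some common neighbor of the endpoints of $e$ receives a strictly smaller label than both endpoints. This is exactly the event that the minimum among $\sigma(u), \sigma(v), \sigma(w_1), \dots, \sigma(w_{t(e)})$ is attained at one of the $\sigma(w_i)$. Because $\sigma$ is uniformly random on $[n]$, by symmetry each of these $t(e) + 2$ values is equally likely to be the minimum, so $\Exp{X_e} = t(e)/(t(e) + 2)$.

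To finish, observe that $\Gs$ being $d$-regular forces $t(e) \leq d - 1$ for every edge: the common neighbors of $u$ and $v$ lie among the $d - 1$ neighbors of $u$ other than $v$. Hence $t(e) + 2 \leq d + 1$, and combining this with $\sum_e t(e) = 3 T(\Gs) \geq 3 c \tm$ (each triangle contributes to three edges, and $\Gs \in \Gcals_{d,c}(n)$ by assumption) yields
\[
\Exp{\abss{\phi(\Gs_\sigma)}} \;=\; \sum_e \frac{t(e)}{t(e)+2} \;\geq\; \frac{1}{d+1}\sum_e t(e) \;\geq\; \frac{3 c \tm}{d+1} \;=\; c \cdot \frac{dn}{2} \cdot \frac{d-1}{d+1} \;=\; T_c,
\]
as required. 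The only nontrivial step is the probability computation in the middle paragraph; once one recognizes that the event ``$e$ contributes'' depends only on the relative ranks of the endpoints and their common neighbors under $\sigma$, the symmetry of the uniform permutation does all the work, and the inequality $t(e) \leq d-1$ converts the nonlinear quantity $t(e)/(t(e)+2)$ into a uniformly comparable $t(e)/(d+1)$ that matches $T_c$ exactly.
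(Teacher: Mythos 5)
Your proof is correct and follows essentially the same route as the paper: linearity of expectation over edges, the observation that $e$ contributes iff a common neighbor precedes both endpoints under $\sigma$ (so $\Prob{X_e = 1} = t_e/(t_e+2)$ by the exchangeability of the $t_e + 2$ relevant labels), and the bound $t_e \le d-1$ to compare with $\sum_e t_e = 3T(\Gs) \ge 3c\tm$. No gaps.
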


\begin{proof}

Let $X_e(\sigma)$ be the indicator variable 
that the edge $e$ of $\Gs$ 
creates a triangle when it is added in the lexicographic order of $\Gs_\sigma$. Then 
$\abss{\phi(\Gs_\sigma)} = \sum_e X_e(\sigma)$ and so 
\[\Exp{\abss{\phi(\Gs_\sigma)}} = \sum_e \Exp{X_e(\sigma)} = \sum_e \Prob{X_e(\sigma)=1} \]
Let $e = (ij)$ and let $e$ be incident to exactly $t_e$ triangles in $\Gs$. Let $v_1, v_2, \dots, v_{t_e}$ denote the third nodes of these triangles. Then $X_e(\sigma)$ is 1 if at least one of these triangles 
is formed at the moment when 
$e$ is added, which is equivalent to at least one of these nodes preceding both $i$ and $j$ in the $\sigma$-order. That is, $\min(\sigma(v_1), \sigma(v_2), \dots, \sigma(v_{t_e})) < \min(\sigma(i), \sigma(j))$. Then $X_e(\sigma) = 0$ if and only if either $i$ or $j$ has the smallest $\sigma$ value among $i,j, v_1, v_2, \dots, v_{t_e}$. Since the $\sigma$-order of these nodes is a uniformly random permutation on $t_e+2$ elements, we get $\Prob{X_e(\sigma) = 0} = 2/(t_e+2)$ and hence $\Prob{X_e(\sigma)=1} = 1-2/(t_e +2)$. 

Thus, since $t_e \leq d-1$, we get
\begin{equation}\label{eq:expected_moments} \Exp{\abss{\phi(\Gs_\sigma)}} = \sum_e \Prob{X_e(\sigma)=1}  = \sum_e \left(1 - \frac{2}{t_e+2}\right) = \sum_e \frac{t_e}{t_e+2} \geq \sum_e \frac{t_e}{d+1} \geq T_c,\end{equation} where the last inequality follows from $\sum_e t_e$ being 3 times the total number of triangles in $\Gs$, which is in turn at least $c \cdot \frac{n}{3} \tbinom{d}{2}$.  This finishes the proof of the lemma.
\end{proof}

\begin{proof}[Proof of Lemma~\ref{lem:orbit_bound}] By simple algebraic considerations 
\begin{equation}\label{eq:alg} \frac{\abs*{ S_n \Gs \cap \phi^{-1}(L) } }{\abs{ S_n \Gs } } = \frac{\abs{\{ \sigma \in S_n : \phi(\Gs_\sigma) \in L \}}}{\abs{S_n}}.\end{equation} This is obvious when $\Gs$ has no automorphisms (that is, when $S_n \Gs$ is in bijection with $S_n$), but it also holds in the general case since the stabilizers of different elements of the orbit $S_n \Gs$  are conjugate and hence have the same cardinality.

Consider a uniformly random permutation $\sigma \in S_n$. By \eqref{eq:alg} it is enough to show that with probability at least $\frac{2}{dn}$ we have $\phi(\Gs_\sigma) \in L$, which is equivalent to $\abss{\phi(\Gs_\sigma)} \geq T_c-1$. 

Observe that $\abss{\phi(\Gs_\sigma)}$ cannot be bigger than $\frac{dn}{2}$. Hence, using Lemma~\ref{lem:expected_triangle_count}
\begin{equation}\label{eq:markov}
\begin{aligned}
	T_c \leq \Exp{\abs{\phi(\Gs_\sigma)}} &\leq \left(T_c-1\right)\Prob{\abs{\phi(\Gs_\sigma)} < T_c-1} + \frac{dn}{2}\Prob{\abs{\phi(\Gs_\sigma)} \geq T_c-1} \\
	&\leq T_c-1 + \frac{dn}{2}\Prob{\abs{\phi(\Gs_\sigma)} \geq T_c-1},
\end{aligned}
\end{equation}
from which we conclude that
\[ \Prob{\abss{\phi(\Gs_\sigma)}  \geq  T_c-1} \geq \frac{2}{dn}. \]

\end{proof}

%%%%%%%%%
\subsection{The structure of regular graphs with a given number of triangles}\label{ssec:structure_proofs}
%%%%%%%%%

A simple extension of the methods of the proof of Theorem~\ref{thm:counting} yields a strong structural description of a typical graph with at least $c \cdot \tm$ triangles: 
\begin{description}
\item[For $d$ fixed:] $1-o(1)$ fraction of all triangles are contained in $d+1$-cliques.
\item[For $1 \ll d \ll n$:] $1-o(1)$ fraction of all triangles are contained in pseudo-cliques. Moreover, these pseudo-cliques are non-overlapping.
\end{description}

We will treat the two cases separately, but the following lemma will be useful for both. As before, we let $t_e$ denote the number of triangles the edge $e$ is incident to. We say the edge $e$ is \emph{$\delta$-bad} if $1 \leq t_e \leq d-1-\delta d$. 

\begin{lemma}\label{lem:badness}
Let $\ep, \delta >0$ fixed. Let $\Gcal_{d,c}^{\ep, \delta} \subset \Gcal_{d,c}$ denote the subset of graphs where at least $\ep (d/2)n$ edges are $\delta$-bad. Then
\[ \log \abs{\Gcal_{d,c}^{\ep,\delta}(n)} \leq  \left(1 - c\frac{d-1}{d+1} - \frac{\ep \delta d}{3d+3}\right) \frac{dn}{2} \log \frac{n}{d} + \left(c + \frac{\ep \delta d}{3d+3}\right)\frac{dn}{2} \log d + O(dn).\]
\end{lemma}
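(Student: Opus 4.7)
The plan is to repeat the upper-bound argument of Theorem~\ref{thm:counting} essentially verbatim, while sharpening Lemma~\ref{lem:expected_triangle_count} to extract an extra contribution from the $\delta$-bad edges. Writing
\[
\frac{t_e}{t_e+2} \;=\; \frac{t_e}{d+1} + \frac{t_e(d-1-t_e)}{(t_e+2)(d+1)},
\]
the first term summed over all edges of $\Gs$ still contributes $\sum_e t_e/(d+1) \geq T_c$ via $\sum_e t_e = 3T \geq cn\binom{d}{2}$. For a $\delta$-bad edge, $t_e \geq 1$ forces $t_e/(t_e+2) \geq 1/3$, while $t_e \leq d-1-\delta d$ forces $d-1-t_e \geq \delta d$, so the second term is at least $\delta d/(3(d+1))$. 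Summed over the at least $\ep dn/2$ bad edges of $\Gs$, this produces an additive surplus
\[
\Delta \;:=\; \frac{\ep\delta d}{3(d+1)} \cdot \frac{dn}{2},
\]
so that $\Exp{\abss{\phi(\Gs_\sigma)}} \geq T_c + \Delta$ for every $\Gs \in \Gcals_{d,c}^{\ep,\delta}(n)$ when $\sigma \in S_n$ is chosen uniformly.

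Next I rerun Lemma~\ref{lem:orbit_bound} with $T_c$ replaced by $T_c + \Delta$. The trivial bound $\abss{\phi(\Gs_\sigma)} \leq dn/2$ together with the Markov chain \eqref{eq:markov} applied with the improved expectation shows that at least a $2/(dn)$-fraction of permutations satisfy $\abss{\phi(\Gs_\sigma)} \geq T_c + \Delta - 1$. Since membership in $\Gcals_{d,c}^{\ep,\delta}(n)$ is preserved by relabelling, summing the orbit bound over orbits of the $S_n$ action yields
\[
\abs*{\Gcals_{d,c}^{\ep,\delta}(n)} \;\leq\; \frac{dn}{2}\, \abs*{\phi^{-1}(L')}, \qquad L' = \bigl\{x \in \{0,1\}^{dn/2} : \abss{x} \geq T_c+\Delta-1\bigr\}.
\]
Applying Lemma~\ref{lem:phi_inverse}, dividing by $(d!)^n \geq (d/e)^{dn}$ and taking logarithms exactly as at the end of the proof of Theorem~\ref{thm:counting}, the only change is that the extra $\Delta$ in the exponent adds $-\Delta \log(n/d^2) = -\Delta \log(n/d) + \Delta \log d$ to the bound, which is precisely the claimed shift of $-\ep\delta d/(3(d+1))$ in the coefficient of $(dn/2)\log(n/d)$ and of $+\ep\delta d/(3(d+1))$ in the coefficient of $(dn/2)\log d$.

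The main obstacle is the sharp evaluation of the per-bad-edge surplus. Obtaining the correct constant $\delta d/(3(d+1))$ requires using \emph{both} endpoints of the badness interval at once: discarding $t_e \geq 1$ removes the $1/3$ factor entirely, while discarding $t_e \leq d-1-\delta d$ together with the crude bound $t_e+2 \leq d+1$ yields only $\delta d/(d+1)^2$ per bad edge, which is weaker by a factor of $\Theta(d)$ and would degrade the total improvement by the same factor. Multiplying the two endpoint inequalities is what delivers the $1/(3(d+1))$ factor appearing in the statement.
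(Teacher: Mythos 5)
Your proposal is correct and follows essentially the same route as the paper: the identical decomposition $\frac{t_e}{t_e+2}=\frac{t_e}{d+1}+\frac{t_e}{t_e+2}\cdot\frac{d-1-t_e}{d+1}$, the same per-bad-edge surplus $\frac{\delta d}{3(d+1)}$ obtained by combining both endpoints of the badness interval, and the same rerun of the Markov/orbit argument and of Lemma~\ref{lem:phi_inverse} with $T_c$ shifted by $\Delta = \frac{\ep\delta d}{3(d+1)}\cdot\frac{dn}{2}$. No gaps.
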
 

\begin{proof}
If $e$ is bad, then $1 \leq t_e \leq d-1-\delta d$, so 
\[ \frac{t_e}{t_e+2} = \frac{t_e}{d+1} + \left( \frac{t_e}{t_e+2} - \frac{t_e}{d+1}\right) = \frac{t_e}{d+1} + \frac{t_e}{t_e+2}\cdot \frac{d - 1-t_e}{d+1}\geq \frac{t_e}{d+1} + \frac{1}{3}\cdot \frac{\delta d}{d+1}.\]
Suppose more than $\ep (d/2)n$ edges of $\Gs \in \Gcals_{d,c}(n)$ are bad.  Combining 
the above with \eqref{eq:expected_moments} we get that for a uniformly random permutation $\sigma \in S_n$ 
\[ \Exp{\abs{\phi(\Gs_\sigma)}} = \sum_e \frac{t_e}{t_e+2} \geq \sum_e \frac{t_e}{d+1} + \frac{\ep \delta d^2 n}{6d+6}  \geq T_c + \frac{\ep \delta d^2 n}{6d+6}.\]
Hence, by the same computation as in \eqref{eq:markov} we get
\[ \Prob{\abs{\phi(\Gs_\sigma)} \geq T_c + \frac{\ep \delta d^2 n}{6d+6} -1} \geq \frac{2}{dn}.\]
Now let 
\[L_{\ep,\delta} = \left\{ x \in \{0,1\}^{\frac{nd}{2}} : \abs{x} \geq T_c + \frac{\ep \delta d^2 n}{6d+6} - 1 \right\}. \]
By the previous considerations, for any $\Gs \in {\Gcals}^{\ep,\delta}_{d,c}(n)$ we get that
\[ \frac{\abs*{ S_n \Gs \cap \phi^{-1}(L_{\ep,\delta}) } }{\abs{ S_n \Gs } } = \frac{\abs{\{ \sigma \in S_n : \phi(\Gs_\sigma) \in L_{\ep,\delta} \}}}{\abs{S_n}} \geq \frac{2}{dn}\]
Summing the inequality $\abs*{ S_n \Gs \cap \phi^{-1}(L_{\ep,\delta}) } \geq \frac{2}{dn} \abs{S_n}$ over the orbits of the $S_n$ action in ${\Gcals}^{\ep,\delta}_{d,c}(n)$ we obtain the estimate 
\[ \abs{{\Gcals}^{\ep,\delta}_{d,c}(n)} \leq \frac{dn}{2} \abs{\phi^{-1}(L_{\ep,\delta})} ,\] which, combined with Lemma~\ref{lem:phi_inverse}, yields
\begin{align*} 
\abs{\Gcal^{\ep,\delta}_{d,c}(n)} &= \frac{\abs{{\Gcals}^{\ep,\delta}_{c,d}(n)}}{(d!)^n} \leq \frac{dn}{2} \frac{\abs{\phi^{-1}(L_{\ep,\delta})}}{(d!)^n} 
\\ &\leq \frac{dn}{2} 2^{dn/2} \frac{ (dn)^{dn/2}}{(d/e)^{dn}} \left(\frac{d^2}{n}\right)^{T_c + \frac{\ep \delta d^2 n}{6d+6} - 1}
\\ &= \frac{dn}{2} (\sqrt{2}e)^{dn} \left(\frac{n}{d}\right)^{\frac{dn}{2} - T_c + 1} d^{T_c-1}
\\ &= (\sqrt{2}e)^{dn} \left(\frac{n}{d}\right)^{\frac{dn}{2} - T_c - \frac{\ep \delta d^2 n}{6d+6}} d^{T_c + \frac{\ep \delta d^2 n}{6d+6}} \cdot \frac{n^2}{2d}.
\end{align*}
Taking $\log$ of both sides finishes the proof.
\end{proof}

%%%%%%%%%
\subsubsection{Fixed $d$}
%%%%%%%%%

Let us say that a node in $G$ is bad if it's not in a $d+1$-clique, but it is in a triangle. The following statement is a (very) slight strengthening of Theorem~\ref{thm:structure_first}.

\begin{theorem}\label{thm:structure}
Let $\ep > 0$ and $d$ fixed. Among all $d$-regular graphs with at least $c \cdot \tm$ triangles, the proportion of those where more than $\ep n$ nodes are bad goes to 0 as $n\to \infty$. This remains true even if $\ep \to 0$, as long as $\ep \log n \to \infty$.
\end{theorem}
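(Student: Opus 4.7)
The plan is to reduce the ``bad node'' condition to the ``$\delta$-bad edge'' condition of Lemma~\ref{lem:badness}, and then just do the counting. Since $d$ is fixed, I will take $\delta = 1/d$, so the $\delta$-bad condition becomes $1 \leq t_e \leq d-2$.

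The structural reduction is the one genuine step: I claim that every bad node $v$ is incident to at least one edge $e=(v,u)$ with $1 \leq t_e \leq d-2$. Indeed, suppose for contradiction that every edge $(v,u)$ with $t_{(v,u)} \geq 1$ actually has $t_{(v,u)} = d-1$. Since $v$ is in a triangle, at least one such edge exists, pick some $u_1 \in N(v)$ with $t_{(v,u_1)} = d-1$. Then $u_1$ is adjacent to all $d-1$ other neighbors of $v$. Now for any other $u \in N(v)$, the vertex $u_1$ is a common neighbor of $v$ and $u$, so $t_{(v,u)} \geq 1$, whence by assumption $t_{(v,u)} = d-1$, meaning $u$ is adjacent to all other vertices of $N(v)$. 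So $N(v)\cup\{v\}$ is a $(d+1)$-clique, contradicting that $v$ is bad.

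Hence if at least $\ep n$ nodes are bad, we find at least $\ep n$ incidences (node, $\delta$-bad edge at it), which after accounting for each edge being counted at most twice yields at least $\ep n /2 = (\ep/d)\cdot(d/2)n$ many $\delta$-bad edges. Applying Lemma~\ref{lem:badness} with parameters $\ep' = \ep/d$ and $\delta = 1/d$, and writing $\Gcal_{d,c}^{\mathrm{bad},\ep}(n)$ for the set of graphs with at least $\ep n$ bad nodes,
\[
\log|\Gcal_{d,c}^{\mathrm{bad},\ep}(n)| \leq \left(1 - c\frac{d-1}{d+1} - \frac{\ep}{d(3d+3)}\right)\frac{dn}{2}\log\frac{n}{d} + \left(c + \frac{\ep}{d(3d+3)}\right)\frac{dn}{2}\log d + O(dn).
\]
Since $d$ is fixed, all $\log d$ and $\log\frac{d+1}{d}$ terms are absorbed into the $O(dn) = O(n)$ error, so this bound becomes
\[
\log|\Gcal_{d,c}^{\mathrm{bad},\ep}(n)| \leq \left(1 - c\frac{d-1}{d+1}\right)\frac{dn}{2}\log\frac{n}{d+1} - \frac{\ep}{d(3d+3)}\cdot\frac{dn}{2}\log n + O(n).
\]

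Combining with the lower bound on $|\Gcal_{d,c}(n)|$ from the proof of Theorem~\ref{thm:counting},
\[
\frac{|\Gcal_{d,c}^{\mathrm{bad},\ep}(n)|}{|\Gcal_{d,c}(n)|} \leq \exp\!\left(-\frac{\ep}{2(3d+3)}\, n\log n + O(n)\right),
\]
which is $o(1)$ exactly when $\ep \log n \to \infty$ (in particular for any fixed $\ep > 0$, and also for $\ep = \log\log n / \log n$ as in Theorem~\ref{thm:structure_first}). The total number of triangles not in a $(d+1)$-clique is then sublinear because each such triangle has all three vertices bad, and for fixed $d$ each bad node is incident to $O(d^2)=O(1)$ triangles.

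The only nontrivial obstacle is the structural claim in the first paragraph; once we have it, the rest is routine bookkeeping applied to Lemma~\ref{lem:badness}.
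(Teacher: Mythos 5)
Your proof is correct and follows essentially the same route as the paper: reduce ``bad node'' to ``$\delta$-bad edge'' with $\delta = 1/d$ and then apply Lemma~\ref{lem:badness} against the lower bound of Theorem~\ref{thm:counting}. The only differences are cosmetic --- you explicitly prove the reduction step that the paper delegates to the omitted Lemma~\ref{lem:max_triangles} (and settle for one bad edge per bad node rather than the paper's two, costing an irrelevant factor of $2$ in the exponent).
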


We will make use of the following simple observation, whose proof we omit.

\begin{lemma}\label{lem:max_triangles}
Let $G$ be a $d$-regular graph. If all edges incident to a node $v$ are incident to exactly $d-1$ triangles, then $v$ is part of a $d+1$-clique.
\end{lemma}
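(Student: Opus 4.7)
The plan is to unpack the definitions directly, since the statement turns out to be almost immediate once one identifies the number of triangles on an edge with a count of common neighbors.

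First, I would label the neighbors of $v$ as $u_1,\dots,u_d$, using $d$-regularity. For each fixed $i$, I would observe that the triangles through the edge $(v,u_i)$ are in bijection with the common neighbors of $v$ and $u_i$: any such triangle $(v,u_i,w)$ forces $w$ to be adjacent to both $v$ and $u_i$, and conversely. Since every common neighbor is in particular a neighbor of $v$, it must lie in the set $\{u_1,\dots,u_d\}\setminus\{u_i\}$, a set of exactly $d-1$ elements.

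Next I would use the hypothesis. The edge $(v,u_i)$ is in exactly $d-1$ triangles, so every element of $\{u_j : j\neq i\}$ must in fact be adjacent to $u_i$. Running this for every $i\in\{1,\dots,d\}$ shows that $u_i\sim u_j$ for all $i\neq j$, i.e.\ $\{u_1,\dots,u_d\}$ induces a clique. Adjoining $v$, which is adjacent to each $u_i$, yields a $(d+1)$-clique on $\{v,u_1,\dots,u_d\}$, as required.

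There is no real obstacle here: the statement is a pigeonhole-style observation, and $d$-regularity is used only to guarantee that the set of neighbors of $v$ has exactly $d$ elements, so that the maximal possible count of $d-1$ triangles per incident edge forces every pair of neighbors of $v$ to be adjacent. No additional estimates or probabilistic input are needed.
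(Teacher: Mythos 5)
Your proof is correct; the paper explicitly omits the proof of this lemma as a ``simple observation,'' and your argument (triangles on the edge $(v,u_i)$ correspond to common neighbors of $v$ and $u_i$, which must lie among the other $d-1$ neighbors of $v$, so attaining the count $d-1$ forces all pairs of neighbors to be adjacent) is exactly the intended elementary pigeonhole argument.
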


\begin{proof}[Proof of Theorem~\ref{thm:structure}]
Let us set $\delta = 1/d$ and call $1/d$-bad edges simply ``bad''. Suppose now that more than $\ep n$ nodes of $G$ are bad. Each bad node, by definition, is adjacent to at least two bad edges, so there are at least $\ep n$ bad edges. Thus $G \in \Gcal^{\frac{2\ep}{d} , \frac{1}{d}}_{d,c}(n)$.
Then, Lemma~\ref{lem:badness}  combined with Theorem~\ref{thm:counting} and the fact that $d = O(1)$ gives
\[ \log \frac{ \abs{\Gcal^{\frac{2\ep}{d} , \frac{1}{d}}_{d,c}(n)}}{\abs{\Gcal_{d,c}(n)}} = -\frac{\frac{2\ep}{d} \frac{1}{d} d}{3d+3} \frac{dn}{2} \log n + O(dn\log d)= -\frac{\ep}{3d+3}n\log n +O(n), \]
so indeed
\[ \lim_{n\to \infty}  \frac{ \abs{\Gcal^{\frac{2\ep}{d} , \frac{1}{d}}_{d,c}(n)}}{\abs{\Gcal_{d,c}(n)}} = 0,\] as long as $\ep \log n \to \infty$, proving that with high probability a graph conditioned on having at least $c \cdot \tm$ triangles has $o(n)$ bad nodes, hence consists almost completely of $d+1$-cliques and a triangle-free part.
\end{proof}

\subsubsection{Growing $d$}\label{sec:growing}

An immediate generalization of Theorem~\ref{thm:structure} cannot hold for the $d \gg 1$ case, because one can exhibit a family of $d$-regular graphs with $c\cdot \tm$ triangles that contain no cliques at all, yet have the optimal, $(1-c)(d/2)n\log \frac{n}{d+1}$, logarithmic growth rate. Such a family can be built, for example, by taking the disjoint union of many copies of $H$, together with a random $d$-regular graph, where $H$ is $K_{d+2}$ minus a perfect matching. Realizing the required $c\cdot \tm$ triangles takes up only slightly more space this way than using copies of $K_{d+1}$, and the resulting decrease in the size of the random part is small enough that it doesn't affect the logarithmic growth rate. One can push this even further, and use disjoint $d+o(d)$ size components (these still contain roughly $\tbinom{d}{3}$ triangles each), and a large random $d$-regular part of the appropriate size. 

We will show in this section, that a typical graph in the ensemble does, in fact, resemble an element of this last family. The main reason the previous argument fails for $d \gg 1$ is because now we cannot choose $\delta$ to be too small in Lemma~\ref{lem:badness}, otherwise the gain will be less in magnitude than the error term $O(dn\log d)$. Nevertheless, if $\log d / \log n$ is small, then the gap between the main term and the error term allows us to choose both $\ep$ and $\delta$ to be small, which will be enough to learn something about the typical graphs in the ensemble. In particular, we can choose
\begin{equation}\label{eq:epde} \ep = \delta^2 = (3c)^{2/3}\cdot \left(\frac{\log d}{\log \frac{n}{d^2}}\right)^{1/2}.\end{equation}

Then Lemma~\ref{lem:badness} implies that in a typical $d$-regular graph with at least $c\cdot \tm$ triangles, most edges are incident to 0 or almost $d$ triangles. As it turns out, this implies a structural description similar to that of Theorem~\ref{thm:structure}. Let us first informally explain the result. We call a subgraph $H \subset G$ a \emph{dense spot} if $|H| \leq d+1$ and $\deg_H(x) = d(1-O(\delta))$ for all $x \in H$. Dense spots satisfy the following simple, combinatorial observations:
\begin{itemize}
\item Two dense spots are either disjoint, or they intersect in $d(1-O(\delta))$ nodes. 
\item Intersection is transitive: if $H_1 \cap H_2 \neq \emptyset$ and $H_2 \cap H_3 \neq \emptyset$ then $H_1 \cap H_3 \neq \emptyset$.
\item The union of a maximal, pairwise intersecting, family of dense spots has size $d(1+O(\delta))$. We call these \emph{pseudo-cliques}. 
\item It follows that any two pseudo-cliques must be disjoint.
\end{itemize}

The following is a restatement of Theorem~\ref{thm:pseudo_structure_first}.

\begin{theorem}\label{thm:pseudo_structure}
Let $1 \ll d \ll n$. Let $\delta$ as in \eqref{eq:epde}, and assume $\delta < 1/16$. With high probability, a random $d$-regular graph with at least $c\cdot \tm$ triangles contains $(1+O(\delta))c n/d$ pseudo-cliques. These pseudo-cliques contain $1-O((\ep+\delta)/c)$ fraction of all triangles. 
\end{theorem}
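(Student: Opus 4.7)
With $\ep=\delta^{2}$ as in \eqref{eq:epde}, I would compare Lemma~\ref{lem:badness} against the lower bound of Theorem~\ref{thm:counting}. The gain term $-\tfrac{\ep\delta d}{3d+3}\cdot\tfrac{dn}{2}\log(n/d)$ dominates both the $O(dn)$ error and the $c(dn/2)\log d$ term: the choice $\ep\delta=\delta^{3}=3c(\log d/\log(n/d^{2}))^{3/4}$ gives
\[
\ep\delta\cdot d\cdot \frac{\log(n/d^{2})}{\log d} = 3cd\cdot\Bigl(\tfrac{\log(n/d^{2})}{\log d}\Bigr)^{1/4}\to\infty.
\]
Hence $|\Gcal_{d,c}^{\ep,\delta}(n)|/|\Gcal_{d,c}(n)|\to 0$, so with high probability at most $\ep(d/2)n$ edges of $G$ are $\delta$-bad. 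In particular, every triangle-incident edge that is not bad satisfies $t_e\geq (1-\delta)d$.

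\textbf{Step 2: From good edges to pseudo-cliques.} For each non-bad triangle-incident edge $e=(u,v)$, set $H_e:=\{u,v\}\cup (N(u)\cap N(v))$, so $|H_e|\leq d+1$ and $u,v$ each have degree $\geq (1-\delta)d$ inside $H_e$. If a common neighbor $w$ of $u,v$ is incident only to non-bad edges, then both $(u,w)$ and $(v,w)$ are non-bad and triangle-incident (via $uv$), hence have $\geq (1-\delta)d$ common neighbors of their own; an inclusion--exclusion inside $N(w)\cap N(u)\cap N(v)$ then forces $|N(w)\cap H_e|\geq (1-O(\delta))d$. Thus $H_e$ is a dense spot except at the few vertices touched by a bad edge, whose contribution is already bounded by Step 1. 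The four bulleted properties follow mechanically: two overlapping dense spots must share $\geq (1-O(\delta))d$ vertices by inclusion--exclusion on the neighborhood of a common vertex inside each; transitivity of overlap and $|H_1\cup\cdots\cup H_k|\leq d(1+O(\delta))$ follow by iterating; so maximal pairwise-overlapping families yield pairwise disjoint pseudo-cliques of size $d(1+O(\delta))$.

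\textbf{Step 3: Counting the pseudo-cliques and the triangles they capture.} A triangle outside every pseudo-clique must use a $\delta$-bad edge (since $t_e=0$ is incompatible with being in a triangle), and there are at most $d\cdot\ep(d/2)n=O(\ep/c)\cdot c\tm$ such triangles. A parallel $O(\delta/c)\cdot c\tm$ loss accounts for triangles internal to a pseudo-clique whose incident edges only approximately attain the common-neighborhood bound; together these yield the claimed $1-O((\ep+\delta)/c)$ fraction inside pseudo-cliques. Each pseudo-clique, having size $d(1+O(\delta))$ and density $1-O(\delta)$, contains $(1\pm O(\delta))d^{3}/6$ triangles, while the total triangle count lies in $[c\tm,(1+O(\delta))c\tm]$—the upper end follows by re-applying Theorem~\ref{thm:counting} with $c$ replaced by $c+\Omega(\delta)$ and checking that $|\Gcal_{d,c+\Omega(\delta)}(n)|/|\Gcal_{d,c}(n)|\to 0$ under the same choice of $\delta$. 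Dividing triangle counts then yields $(1+O(\delta))cn/d$ pseudo-cliques.

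\textbf{Main obstacle.} The real work is Step~2: extracting a clean algebraic picture (disjointness, transitive overlap, tight size $d+o(d)$) from only approximate, $O(\delta)$-slack degree conditions. Every vertex touched by a bad edge is a potential source of error, and these errors have to be tracked uniformly as the four properties are iterated across many dense spots, so that the total deficit stays within the budget from Step 1 and does not multiplicatively blow up when many dense spots are merged into one pseudo-clique.
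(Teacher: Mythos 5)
Your overall strategy is the paper's: apply Lemma~\ref{lem:badness} with $\ep=\delta^2$ as in \eqref{eq:epde} to restrict to graphs with few $\delta$-bad edges, grow a dense spot around each remaining triangle-incident edge from the common neighborhood of its endpoints, verify the overlap/transitivity/size claims, and count. Your Step 1 and the combinatorial claims in Step 2 (disjoint-or-large-overlap, transitivity, size $d(1+O(\delta))$, disjointness of pseudo-cliques) match the paper, and your Step 3 actually spells out the pseudo-clique count more explicitly than the paper does; in particular the upper bound on the total triangle count via $\abs{\Gcal_{d,c+\Omega(\delta)}(n)}/\abs{\Gcal_{d,c}(n)}\to 0$ is a sensible way to close that part.

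The one genuine gap is the point you yourself flag as the ``main obstacle,'' and it has a concrete resolution you do not supply. Your $H_e=\{u,v\}\cup(N(u)\cap N(v))$ is not a dense spot ``except at a few vertices'': the definition, and all four structural claims that follow, require \emph{every} vertex of the spot to have degree at least $(1-O(\delta))d$ inside it, and a common neighbor $w$ joined to $u$ or $v$ by a bad edge can have small degree in $H_e$. You cannot discharge such vertices by pointing back to Step 1, because Step 1 bounds the number of bad edges globally, not their incidence pattern at a particular $H_e$. The paper's fix is a second layer of classification: call a \emph{node} bad if it is incident to at least $\delta d$ bad edges (there are at most $\delta n$ such nodes precisely because $\ep=\delta^2$), discard all triangles touching a bad edge or a bad node --- an $O(\ep+\delta)\tm$ loss, which is where the $O(\delta/c)$ term really comes from, not from ``approximately attained'' neighborhood bounds --- and then, for a good edge $uv$ between two \emph{good} nodes, define the spot as $\{u,v\}$ together with only those common neighbors joined to both $u$ and $v$ by good edges. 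Goodness of $u$ and $v$ guarantees this prunes at most $2\delta d$ vertices, so the spot still has at least $(1-3\delta)d$ nodes, and goodness of each surviving edge $xu$ gives $x$ degree at least $(1-4\delta)d$ in the spot. With that device your Step 2 closes and the uniform error tracking you worry about becomes unnecessary: every dense spot satisfies the degree condition exactly, so the four claims hold as stated and nothing accumulates when spots are merged into pseudo-cliques. Relatedly, your Step 3 assertion that a triangle outside all pseudo-cliques ``must use a $\delta$-bad edge'' should read ``must use a bad edge or touch a bad node.''
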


\begin{remark}
Theorem~\ref{thm:pseudo_structure} is the strongest when $\log d = o(\log n)$, as in this case both $\ep$ and $\delta$ are $o(1)$. However,  when $d = n^\beta$ then $\delta = (3c)^{1/3} \left( \frac{\beta}{1-2\beta}\right)^{1/4}$, so we still get a non-trivial structural result when $\beta$ is small enough.
\end{remark}

\begin{proof}
We set $\ep$ and $\delta$ according to \eqref{eq:epde}. Then, a careful calculation using Lemma~\ref{lem:badness} shows that we have 
\[\lim_{n\to \infty}  \frac{ \abs{\Gcal^{\ep,\delta}_{d,c}(n)}}{\abs{\Gcal_{d,c}(n)}}=0,\] so it is enough to consider a graph $G \in G_{d,c}(n) \setminus G_{d,c}^{\ep, \delta}(n)$.  The graph $G$ then has, by definition, less than $\ep (d/2)n$ edges that are $\delta$-bad.  Let us call a $\delta$-bad edge \emph{bad} for brevity, and other edges \emph{good}. Let us start by removing all edges with $t_e = 0$ from $G$, and denote the remaining graph by $G'$. Removing such edges doesn't change the $t_e$ value of the remaining edges. Let us call a node $v \in G'$ bad, if it is incident to at least $\delta d$ bad edges. Then, since $\ep = \delta^2$, it follows that $G'$ cannot have more than $\delta n$ bad nodes. 

The total number of triangles that are incident to either a bad edge or a bad node is at most $\ep (d/2)n \cdot d + \delta n \tbinom{d}{2} = O(\ep+\delta) \cdot \tm$. We will show that the rest of the triangles are concentrated in pseudo-cliques.

\begin{definition}
A subgraph $H \subset G$ is a \emph{dense spot} if $|H| \leq d+1$ and each node $x \in H$ has $\deg_H(x) \geq (1-4\delta)d$.
\end{definition}

\begin{claim} Let $H_1,H_2$ be dense spots. Then they are either disjoint, or $\abs{H_1 \cap H_2} \geq (1-8\delta)d$.  This follows from the fact the nodes in the intersection must have degree $\leq d$.
\end{claim}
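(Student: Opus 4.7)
The plan is to use the hint directly: exploit the fact that any node $x$ in the intersection has total $G$-degree exactly $d$, while its $H_1$-degree and $H_2$-degree are each at least $(1-4\delta)d$. Since all $H_i$-neighbors of $x$ are in particular $G$-neighbors of $x$, the two neighborhoods $N(x)\cap H_1$ and $N(x)\cap H_2$ live inside a set of size at most $d$, so they must heavily overlap.

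More concretely, I would first dispose of the disjoint case (nothing to prove), then pick an arbitrary $x \in H_1 \cap H_2$. From the definition of dense spot we have $|N(x) \cap H_i| = \deg_{H_i}(x) \geq (1-4\delta)d$ for $i=1,2$, while $|N(x)| = d$. Inclusion–exclusion inside $N(x)$ gives
\[
|N(x)\cap H_1 \cap H_2| \;\geq\; |N(x)\cap H_1| + |N(x)\cap H_2| - |N(x)| \;\geq\; 2(1-4\delta)d - d = (1-8\delta)d.
\]
Since every element of $N(x)\cap H_1\cap H_2$ belongs to $H_1\cap H_2$, this immediately yields $|H_1 \cap H_2| \geq (1-8\delta)d$, which is the desired bound.

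There is really no main obstacle here: the argument is a one-line inclusion–exclusion once one notices that the defining lower bounds on $\deg_{H_i}(x)$ are both bounds on subsets of the $d$-element set $N(x)$. The only mild subtlety worth flagging is that $H_1,H_2$ are subgraphs rather than just vertex subsets, so one should be explicit that the edges between $x$ and its $H_i$-neighbors exist in $G$ (which they do, since $H_i \subset G$), ensuring that $\deg_{H_i}(x)$ actually counts $G$-neighbors of $x$ lying in $V(H_i)$. With this clarification the claim follows, and it is exactly in the form needed for the subsequent transitivity and pseudo-clique statements: any two overlapping dense spots share $(1-8\delta)d$ vertices, so their union has size at most $2(d+1)-(1-8\delta)d = (1+8\delta)d+2$, keeping us inside the $d(1+O(\delta))$ regime claimed in the informal discussion preceding the theorem.
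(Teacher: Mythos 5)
Your proof is correct and is exactly the argument the paper intends with its one-line justification: fix $x \in H_1 \cap H_2$, note that $N(x)\cap H_1$ and $N(x)\cap H_2$ are both subsets of the $d$-element set $N(x)$ of size at least $(1-4\delta)d$, and apply inclusion--exclusion to get $(1-8\delta)d$ common neighbours, all of which lie in $H_1 \cap H_2$. No substantive difference from the paper's approach; your write-up simply makes the implicit step explicit.
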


\begin{claim} Let $H_1,H_2,H_3$ be dense spots. If $H_1 \cap H_2 \neq \emptyset$ and $H_2 \cap H_3 \neq \emptyset$ then $H_1 \cap H_3 \neq \emptyset$, since otherwise we would have $d+1 \geq |H_2| \geq |H_2\cap H_1| + |H_2 \cap H_3| \geq  2d(1-8\delta)$ which contradicts $\delta < 1/16$. 
\end{claim}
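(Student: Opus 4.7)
My plan is to argue by contradiction. First I would suppose $H_1\cap H_3=\emptyset$ and apply the previous claim twice: once to the pair $(H_1,H_2)$ to get $|H_1\cap H_2|\geq (1-8\delta)d$, and once to $(H_2,H_3)$ to get $|H_2\cap H_3|\geq (1-8\delta)d$. Both applications are legitimate, since both pairs consist of dense spots with nonempty intersection, which is precisely the hypothesis of the first claim.

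The key step I would then make is a pigeonhole inside $H_2$. The two subsets $H_2\cap H_1$ and $H_2\cap H_3$ of $H_2$ are disjoint: a common vertex would lie in $H_1\cap H_3$, contradicting the working assumption $H_1\cap H_3=\emptyset$. Therefore
\[
|H_2|\;\geq\;|H_2\cap H_1|+|H_2\cap H_3|\;\geq\;2(1-8\delta)d.
\]
Comparing this with the dense-spot upper bound $|H_2|\leq d+1$ yields $d+1\geq 2(1-8\delta)d$, i.e.\ $16\delta\geq 1-1/d$. In the regime $1\ll d$ of Section~\ref{sec:growing}, the right-hand side tends to $1$, so this forces $\delta\geq 1/16-o(1)$, contradicting the standing hypothesis $\delta<1/16$ once $d$ is sufficiently large. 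Hence $H_1\cap H_3\neq\emptyset$, as claimed.

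I do not anticipate any genuine obstacle here: the argument is essentially one line of pigeonhole, with the nontrivial content outsourced to the previous claim (producing the $(1-8\delta)d$ lower bounds on pairwise intersections of dense spots) and to the definition of a dense spot (producing the $d+1$ upper bound). The only point worth double-checking is the closing arithmetic, namely that the inequality $d+1\geq 2(1-8\delta)d$ rearranges to $\delta\geq (d-1)/(16d)$, and that $\delta<1/16$ is incompatible with this for $d$ large. Since the growing-$d$ regime is the standing assumption throughout Section~\ref{sec:growing}, the contradiction is secured and the transitivity claim follows.
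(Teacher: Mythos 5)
Your proof is correct and is essentially identical to the paper's: contradiction via the pigeonhole $|H_2|\geq |H_2\cap H_1|+|H_2\cap H_3|\geq 2(1-8\delta)d$ against the bound $|H_2|\leq d+1$, using the previous claim for the two intersection sizes. You are in fact slightly more careful than the paper in noting that the arithmetic only yields $\delta\geq (d-1)/(16d)$ rather than $\delta\geq 1/16$ exactly, so the contradiction with $\delta<1/16$ requires $d$ large --- which is the standing assumption in that section.
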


\begin{definition}
A subgraph $K \subset G$ is a \emph{pseudo-clique} if there is a maximal family $\mathcal{H}$ of pairwise intersecting dense spots
such that $K = \cup_{H \in \mathcal{H}} H$.
\end{definition}
 
\begin{claim} By definition, any dense spot $H$ is either disjoint from, or fully contained in, a pseudo clique $K$. Furthermore, any two distinct pseudo-cliques are disjoint.
\end{claim}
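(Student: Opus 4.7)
My plan is to verify the two assertions by a short combinatorial argument that exploits transitivity of intersection (the preceding claim) together with the maximality built into the definition of a pseudo-clique. Neither assertion will require any new quantitative input beyond the $|H_1 \cap H_2| \geq (1-8\delta)d$ bound established earlier; the work is purely set-theoretic.

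For the dichotomy, I will fix a dense spot $H$ and a pseudo-clique $K = \bigcup_{H' \in \mathcal{H}} H'$ and dispose of the case $H \cap K = \emptyset$ immediately. In the remaining case $H$ meets some $H' \in \mathcal{H}$, and I will then show that $H$ meets every $H'' \in \mathcal{H}$: since $H' \cap H'' \neq \emptyset$ by pairwise intersection inside $\mathcal{H}$, transitivity applied to the triple $H, H', H''$ gives $H \cap H'' \neq \emptyset$. Hence $\{H\} \cup \mathcal{H}$ is still a pairwise-intersecting family of dense spots, so maximality of $\mathcal{H}$ forces $H \in \mathcal{H}$, and therefore $H \subset K$.

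For disjointness of distinct pseudo-cliques, I will set $K_i = \bigcup_{H \in \mathcal{H}_i} H$ for $i=1,2$ and argue by contradiction from $K_1 \cap K_2 \neq \emptyset$. Picking $H_1 \in \mathcal{H}_1$ and $H_2 \in \mathcal{H}_2$ with $H_1 \cap H_2 \neq \emptyset$, I will first show $H_2 \in \mathcal{H}_1$: for each $H_1' \in \mathcal{H}_1$, transitivity applied to $H_2, H_1, H_1'$ (using pairwise intersection inside $\mathcal{H}_1$) yields $H_2 \cap H_1' \neq \emptyset$, so maximality of $\mathcal{H}_1$ puts $H_2$ into $\mathcal{H}_1$. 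Running the same argument with each $H_2' \in \mathcal{H}_2$ in place of $H_2$ — chaining through $H_2$, which now intersects every element of $\mathcal{H}_1$ — yields $\mathcal{H}_2 \subset \mathcal{H}_1$; by symmetry $\mathcal{H}_1 = \mathcal{H}_2$, i.e.\ $K_1 = K_2$, contradicting distinctness.

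The only subtlety, which the paper's ``By definition'' phrasing signals is really no more than routine, is to apply transitivity correctly: it only asserts that $A \cap B \neq \emptyset$ together with $B \cap C \neq \emptyset$ implies $A \cap C \neq \emptyset$, so every step must be chained through a common member of the relevant family rather than invoked directly across two families. Once this chaining is done consistently, no further obstacle arises, and the two statements follow.
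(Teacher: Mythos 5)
Your proof is correct and is exactly the argument the paper intends: the paper dismisses the claim with ``By definition,'' and the implicit justification is precisely your chaining of the transitivity claim through a common member of the family followed by an appeal to maximality of $\mathcal{H}$. Your remark about applying transitivity only through a shared element is the right (and only) point of care here.
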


 \begin{lemma}\label{lem:pseudo-clique} If $K$ is a pseudo-clique then $|K| \leq \frac{1 - 8\delta}{1- 13\delta} (d+1) = (1+O(\delta))d$.
 \end{lemma}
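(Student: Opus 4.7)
The plan is to fix an arbitrary dense spot $H_0$ in the maximal family $\mathcal{H}$ defining $K$, and then bound $|K \setminus H_0|$ by double-counting the edges between $H_0$ and $K \setminus H_0$. Since $|H_0| \le d+1$, an upper bound of roughly $O(\delta) d$ on $|K \setminus H_0|$ will yield the claimed estimate.

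First I would establish that every $v \in K \setminus H_0$ has many neighbors in $H_0$. Indeed, by the definition of a pseudo-clique, $v$ lies in some dense spot $H \in \mathcal{H}$; by maximality $H$ intersects $H_0$, and by the first claim after the definition of a dense spot, $|H \cap H_0| \ge (1-8\delta)d$. Consequently
\[
|H \setminus H_0| \le (d+1) - (1-8\delta)d = 8\delta d + 1.
\]
The dense-spot condition gives $\deg_H(v) \ge (1-4\delta)d$, and at most $|H \setminus H_0| - 1$ of these neighbors can lie outside $H_0$, so $v$ has at least
\[
(1-4\delta)d - 8\delta d = (1-12\delta)d
\]
neighbors inside $H_0$. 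Summing over $v \in K \setminus H_0$ gives a lower bound of $|K \setminus H_0|\cdot (1 - 12\delta)d$ on the number of $H_0$-to-$(K \setminus H_0)$ edges.

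Next I would bound the same quantity from the $H_0$ side. Every $w \in H_0$ has total degree $d$ in $G$ and $\deg_{H_0}(w) \ge (1-4\delta)d$, so $w$ sends at most $4\delta d$ edges out of $H_0$. Hence the number of edges from $H_0$ to $K \setminus H_0$ is at most $|H_0|\cdot 4\delta d \le (d+1)\cdot 4\delta d$. Combining the two bounds yields
\[
|K \setminus H_0|\cdot (1-12\delta)d \le (d+1)\cdot 4\delta d,
\]
so $|K \setminus H_0| \le \frac{4\delta(d+1)}{1-12\delta}$ and therefore
\[
|K| \le (d+1) + \frac{4\delta(d+1)}{1-12\delta} = (d+1)\cdot\frac{1-8\delta}{1-12\delta},
\]
which is of the form claimed (the $1-13\delta$ in the denominator simply absorbs the lower-order $+1$ slack from $|H \setminus H_0| \le 8\delta d + 1$ and from $|H_0| \le d+1$). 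There is no real obstacle here; the only subtle point is the first step, where one must carefully track that a neighbor of $v$ in $H$ that is not in $H \setminus H_0$ must lie in $H \cap H_0 \subseteq H_0$, which is what converts the within-$H$ degree bound into the across-edges count that drives the argument.
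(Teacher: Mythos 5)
Your proof is correct and follows essentially the same route as the paper: fix one dense spot $H_0 \subseteq K$, bound the edges leaving $H_0$ by $|H_0|\cdot 4\delta d$, and bound from below the number of $H_0$-neighbors of each $v \in K\setminus H_0$ via the dense spot containing $v$ and the $(1-8\delta)d$ overlap claim. Your bookkeeping is in fact marginally sharper (you get $1-12\delta$ where the paper settles for $1-13\delta$ in the denominator), which still implies the stated bound since $\frac{1-8\delta}{1-12\delta} \le \frac{1-8\delta}{1-13\delta}$.
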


\begin{proof}[Proof of Lemma~\ref{lem:pseudo-clique}]
Let $H \subset K$ be one of the dense spots in $K$. For any node $x\in H$ we have $\deg_H(x) \geq (1-4\delta)d$. But $\deg(x) = d$, thus the total number of edges going between $H$ and $K \setminus H$ is at most $|H| \cdot 4\delta d \leq 4\delta d(d+1)$. However, each node $y \in K\setminus H$ is contained in a dense spot $H'$, and thus $\deg_{H'}(y) \geq (1-4\delta)d$. Since $\abs{H' \setminus H} \leq 8\delta d + 1 \le 9\delta d$, we get that at least $(1-13\delta)d$ edges go from $y$ to $H$. Hence \[|K \setminus H|(1-13\delta)d \leq  4\delta d(d+1),\] from which \[ |K| \leq |H| + \frac{4\delta(d+1)}{1- 13\delta} \leq \frac{1 - 8\delta}{1- 13\delta} (d+1) \] as claimed.
\end{proof}

To finish the proof of Theorem~\ref{thm:pseudo_structure}, we need to show that any triangle that's only incident to good edges and good nodes is contained in a pseudo-clique. We will show slightly more: that a good edge connecting good nodes is in a pseudo-clique. 

Consider a good edge $uv$ in $G'$, where both $u$ and $v$ are good nodes. Since we already removed the edges with no triangles, $t_{uv} \geq d-\delta d$. In particular $u$ and $v$ share at least $d-\delta d$ common neighbors. Each of $u$ and $v$ may be incident to at most $\delta d$ bad edges. That means that the subset $H_0$ of common neighbors of $u$ and $v$ that are connected to both of them via good edges has size $|H_0| \geq d-3\delta d$. Let $H = H_0 \cup \{u,v\}$. We claim $H$ is a dense spot. Clearly $|H| \leq 1+\deg(u) = d+1$, and by construction, $\deg_H(u), \deg_H(v) \geq (1-3\delta)d \geq (1-4\delta)d$. What remains to show is that for any node $x \in H_0$ we have $\deg_H(x) \geq (1-4\delta)d$. But 
%\pim-pass1:the 
$xu$ is a good edge, hence $x$ and $u$ have at least $(1-\delta)d$ common neighbors, or equivalently, at most $\delta d $ of $u$'s neighbors are not connected to $x$. Thus $x$ is connected to at least $(1-4\delta)d$ nodes in $H$, proving that indeed $H$ is a dense spot. So the $uv$ edge is contained in a dense spot, and thus in a pseudo-clique. 
\end{proof}
%%%%%%%%%%%%%%
\subsection{$k$-cliques}\label{sec:k_clique}
%%%%%%%%%%%%%%

We can easily extend the above results from triangles to $k$-cliques. Let $\Gcal_{d,c,k}(n)$ denote the set of $d$-regular graphs on $n$ nodes that contain at least $c \cdot \tbinom{d}{k-1} \frac{n}{k}$ subgraphs isomorphic to  $K_k$. (The maximum possible number of subgraphs isomorphic to $K_k$ is clearly $ \tbinom{d}{k-1} \frac{n}{k}$.)

%\begin{theorem}\label{thm:k_clique}
%\[  \abs*{\frac{\log \abs{\Gcal_{d,t_k,k}(n)}}{n \log n} - \left(\frac{d}{2} - \frac{t}{d+1}\right)} = O(1/\log n)\]  where 
%\[ t = t_k \frac{\binom{k-1}{2}}{\binom{d-3}{k-3}}.\]
%Furthermore, almost all elements of $\Gcal_{d,t_k,k}(n)$ will have at most $\ep n$ bad nodes.
%\end{theorem}

\begin{proof}[Proof of Theorem~\ref{thm:k_clique}]
The idea is a simple reduction the the $k=3$ case. Clearly, each $G \in \Gcal_{d,c,k}(n)$ has at least \[ c \cdot \tbinom{d}{k-1} \frac{n}{k} \frac{\tbinom{k}{3}}{\tbinom{d-2}{k-3}} = c \cdot \tbinom{d}{2}\frac{n}{3} = c \cdot \tm \]  triangles, so $\Gcal_{d,t_k,k}(n) \subset \Gcal_{d,c}(n)$, which implies the upper bound of the theorem. 
On the other hand, the family of graphs constructed in Theorem~\ref{thm:counting} contain 
\[b \binom{d+1}{k} =  c \cdot \frac{n}{d+1}\binom{d+1}{k} = c\cdot \binom{d}{k-1} \frac{n}{k}\] $k$-cliques, so this family is contained in $\Gcal_{d,c,k}(n)$, implying the lower bound of the theorem. Finally, the structural statement follows directly from Theorem~\ref{thm:structure}.
\end{proof}

\paragraph{Acknowledgements}
The authors thank Dmitri Krioukov for useful discussions on the related topic of sparse maximum entropy graphs with given number of triangles, which lead us to the upper tail problem. Pim van der Hoorn and Gabor Lippner were supported by ARO grant W911NF1610391, Gabor Lippner was also supported by NSF grant DMS 1800738, and Elchanan Mossel was supported by NSF grant DMS-1737944 and ONR grant N00014-17-1-2598.

\bibliographystyle{plain}
\bibliography{references}

\end{document}